\documentclass[10pt,final,3p,times]{elsarticle}
\journal{FILOMAT}
\usepackage{amsmath}
\usepackage{lipsum}
\usepackage{amsfonts}
\usepackage{graphicx}
\usepackage{natbib}
\usepackage{hyperref}
\usepackage{epsfig}
\usepackage{amssymb}
\usepackage{cite}
\usepackage{float}
\usepackage{multirow}
\usepackage{subcaption}
\usepackage{balance}

\usepackage{amsthm}
 \newtheorem{thm}{Theorem}[section]
 \newtheorem{cor}[thm]{Corollary}

 \newtheorem{defn}[thm]{Definition}
 \newtheorem{exmp}{Example}[section]
\begin{document}
\begin{frontmatter}
\title{A rapidly convergent approximation scheme for nonlinear autonomous and non-autonomous wave-like equations 
}
\author{Prakash Kumar Das$^1$}
\ead{prakashdas.das1@gmail.com}

\author{M.M. Panja$^2$\corref{mycorrespondingauthor}}
\ead{madanpanja2005@yahoo.co.in}

\address{$^1$Department of Mathematics, Trivenidevi Bhalotia College, Raniganj-713 347, Burdwan,
West Bengal, India \\
$^2$Department of Mathematics, Visva-Bharati, Santiniketan - 731 235, West Bengal, India
}

\begin{abstract}
In this work, an efficient approximation scheme has been proposed for getting accurate approximate solution of nonlinear partial differential equations with constant or variable coefficients satisfying initial conditions in a series of exponential instead of an algebraic function of independent variables. As a consequence: i) the convergence of the series found to be faster than the same obtained by few other methods and ii) the exact analytic solution can be obtained from the first few terms of the series of the approximate solution, in cases the equation is integrable. The convergence of the sum of the successive correction terms has been established and an estimate of the error in the approximation has also been presented. The efficiency of the present method has been illustrated through some examples with a variety of nonlinear terms present in the equation.
\end{abstract}
\begin{keyword}
Autonomous and non-autonomous wave-like equations  \sep Rapidly convergent approximation scheme \sep  Accurate approximate solution \sep Convergence analysis \sep Error  estimate\sep Exact solution    
\end{keyword}
\end{frontmatter}
\section{Introduction}
In many branches of physical, biological and engineering sciences, mathematical analysis of natural phenomena are found to begin with nonlinear ordinary/partial differential equations(ODEs/ PDEs). Solutions of such equations help one to realize the process. But apart from some ideal cases, getting the exact solution of mathematical models of natural processes comprising ODEs or PDEs with an appropriate initial/boundary conditions is a formidable task. So, the development of an efficient method for getting an exact solution in a compact form or reliable approximate solution of such equations is highly desirable.

A few analytical methods such as method based on symmetry analysis (Lie group theoretic approach) \citep{olver2000applications,ibragimovcrc}, Prelle-Singer method \citep{choudhury2008solutions}, the method involving Jacobi last multiplier \citep{nucci2005jacobi}, Tanh, Sech, Exp method \citep{malfliet1992solitary, malfliet1996tanh} and so on, analytical approximation schemes such as  homotopy analysis method (HAM)\citep{abbasbandy2009homotopy,xinhui2012homotopy}, Adomian decomposition method (ADM) \citep{adomian1994solving}, Fourier transform Adomian decomposition method (FTADM) \citep{nourazar2013new}, rapidly convergent approximation method (RCAM) \citep{das2015improved,das2016rapidly,das2018rapidly,das2018solutions,das2018piecewise,das2019some,das2019rapidly} etc., numerical methods viz. finite difference/element methods \citep{butcher2003numerical,brenner2007mathematical}, Galerkin or collocation methods are used to find the solution of mathematical models. In every method, the main interest is to develop a scheme that yields a physically meaningful solution with a minimum computational effort. The effort expended can be assessed in terms of the straightforward implementation of the method and computer resources required to achieve a specified accuracy. 

Among the approximation methods mentioned above, ADM is found to be simpler. In ADM, a solution corresponding to the highest order derivative term only (regarded as the unperturbed or leading part) has been taken as the leading order solution. Successive corrections are obtained by a recursive scheme based on the application of the inverse of the unperturbed part of the operator on the correction terms of the previous order. The rate of convergence of the correction terms is found relatively slow in general. Naturally, it is of curiosity to verify whether a straightforward method can be developed which can provide either an exact solution in a compact form or a rapidly convergent approximate solution to the problem in an efficient  way.

In this work, we have addressed this problem and proposed a recursive scheme for getting an exact solution or rapidly convergent approximate solution of nonlinear initial value problems involving PDEs with constant or variable space dependent coefficients. Here we have introduced an operator associated with the (whole) linear part of the equation and derived a straightforward formula involving inverse of such operator for correction terms associated with the nonlinear part of the differential equation involved in the model. It provides the solution in a series of exponentials instead of power of independent variables as appear in the case of conventional ADM. 

The present investigation has been described in six sections as follows. Salient steps of ADM have been discussed in sections \ref{sec2} and its generalization (rapidly convergent approximate scheme (RCAS)) towards a scalar autonomous and non-autonomous nonlinear partial differential equation has been presented in section \ref{sec3}. An estimate of error has been analysed in section \ref{sec4}. The usefulness and efficiency of the scheme developed here have been illustrated through a few examples in section \ref{sec5}. Salient features of the proposed scheme have been summarized in section \ref{sec6}.

\section{ADM for nonlinear wave-like equation}\label{sec2}
Here we describe the salient steps of ADM for nonlinear wave-like equation
\begin{equation}\label{eq2p1}
u_{tt}(X,t)= {\cal N}[u](X,t)+\lambda^2 u(X,t)+S(X,t)
\end{equation}
with initial conditions
\begin{equation}\label{eq2p2}
u(X,0) = a_0(X), \ u_t(X,0) = a_1(X)
\end{equation}
Here $X=(x_1,x_2,\cdots,x_n) \in \Omega' \subseteq \mathbb{R}^n$,$\,S(X,t)$ is a given analytic source term and ${\cal N}[u](X,t)$ is nonlinear term involving independent variables $X,t$ and the dependent variable $u$.
It is customary to recast Eq.(\ref{eq2p1}) in the operator form
\begin{equation}\label{eq2p3}
{\cal L}_{tt}[u](X,t)= {
\cal N}[u](X,t)+\lambda^2 u(X,t)+S(X,t)
\end{equation}
where ${\cal L}_{tt}[\cdot]=\frac{\partial^2}{\partial t^2}[\cdot]$. Then the inverse operator ${\cal L}_{tt}^{-1}[\cdot]=\int_0^t \left\{\int_0^{t'}[\cdot](t'') \ dt''\right\} \ dt'$ seems to exist. Application of this inverse operator into both sides of Eq.(\ref{eq2p3}) gives
\begin{equation}\label{eq2p4}
u(X,t)= u(X,0)+t u_t(X,0)+{\cal L}_{tt}^{-1}[{
\cal N}[u](X,t)]+\lambda^2 {\cal L}_{tt}^{-1}[u(X,t)]+{\cal L}_{tt}^{-1}[S](X,t).
\end{equation}
In ADM and all its modifications, unknown solution $u(X,t)$ is assumed in the form
\begin{equation}\label{eq2p5}
u(X,t) =  \sum_{k=0}^\infty u_k(X,t)
\end{equation}
and recast the nonlinear term ${\cal N}[u](X,t)$  into in a series of terms
\begin{equation}\label{eq2p6}
{\cal N}[u](X,t)\equiv \sum_{m=0}^\infty {\cal A}_m(X,t) = \sum_{m=0}^\infty {\cal A}_m(u_0(X,t),u_1(X,t),\cdots,u_{m}(X,t)).
\end{equation}
Here ${\cal A}_m (X,t)= {\cal A}_m (u_0(X,t),u_1(X,t),\cdots,u_m(X,t)), m =0,1,\cdots $ are known as Adomian polynomials extracted from nonlinear term by using the formula \citep{adomian1994solving}
\begin{equation}\label{eq2p7}
{\cal A}_m(X,t)= \frac{1}{m!}\left[\frac{d^m}{d\epsilon^m}{\cal N}\left(\sum_{k=0}^\infty u_k\epsilon^k \right)\right]_{\epsilon=0}, \ \ m \geq 0.
\end{equation}
The pedagogical scheme for obtaining the unknown leading component $u_0(X,t)$ and higher order corrections  $u_n(X,t), \ n \geq 1,$ is the following.

The leading approximation has been taken as the solution of ${\cal L}_{tt}[u](X,t)=S(X,t)$ satisfying initial condition (\ref{eq2p2}) so that it can be obtained in a straightforward way from the formula
\begin{equation}\label{eq2p8}
u_0(X,t)= a_0(X)+t\; a_1(X)+{\cal L}_{tt}^{-1}[S](X,t).
\end{equation}
Here the terms involving initial conditions is a first order polynomial in $t$. The successive terms in the expansion (\ref{eq2p5}) for $u(X,t)$ can then be obtained recursively by using the formula
\begin{equation}\label{eq2p9}
u_{n+1}(X,t)= \lambda^{2}{\cal L}_{tt}^{-1}[u_n](X,t)+{\cal L}_{tt}^{-1}[{\cal A}_n](X,t),\ \ \  n\geq 0.
\end{equation}
It is important to mention here that ADM does not include linear part $u(X,t)$ into the operator ${\cal L}_{tt}$. In its revised form proposed here all linear terms involved in the equation have been assembled in the operator ${\cal L}$ to get a more accurate and physically meaningful leading order approximation of the solution to the problem as described in the following section.

\section{RCAS for nonlinear wave-like equation}\label{sec3}
Instead of considering (\ref{eq2p1}), we consider a relatively general form
\begin{equation}\label{eq2p10}
u_{tt}(X,t)-( \lambda_1 + \lambda_2)\; u_{t}(X,t)  + \lambda_1 \lambda_2 \; u(X,t)={\cal N}[u,\cdots,u_{x_i},\cdots,u_{x_i x_j}\cdots,\cdots](X,t) + S(X,t)
\end{equation}
with the same initial condition as given in (\ref{eq2p2}). Here $u_{x_i},u_{x_i x_j}$ etc., represents partial derivatives of $u$ with respect to the variables present in the subscripts.  In contrast to defining $\hat{\cal O}$ involving only the (linear) highest order derivative term only in  the classical ADM, we define instead the operator$ \ \hat{\cal O}[\cdot](X,t) = \frac{\partial^2}{\partial t^2}[\cdot](X,t) -(\lambda_1+\lambda_2)\frac{\partial}{\partial t}[\cdot](X,t)+\lambda_{1}\lambda_{2}[\cdot](X,t) \equiv (\frac{\partial}{\partial t}-\lambda_{2})(\frac{\partial}{\partial t}-\lambda_{1})[\cdot](X,t)$ incorporating all linear terms present in the equation. This helps to recast Eq.(\ref{eq2p10}) into the form
\begin{equation}\label{eq2p11}
\hat{\cal{O}}[u](X,t)={\cal N}[u,\cdots,u_{x_i},\cdots,u_{x_i x_j}\cdots,\cdots](X,t) + S(X,t), \ \ X \in \Omega' \subseteq \mathbb{R}^n.
\end{equation}
It may be useful to mention here that the linear operator $ \hat{{\cal O}}[\cdot] $ can be written in the form
\begin{equation}\label{eq2p12}
\hat{{\cal O}}[\cdot] = e^{\lambda_{2}t}\frac{\partial}{\partial t}\left(e^{(\lambda_{1}-\lambda_{2})t}\frac{\partial}{\partial t}\left(e^{-\lambda_{1}t}[\cdot]\right)\right).
\end{equation}
Reformulation of $\hat{\cal O}$ mentioned above plays a key role in expressing the solution in terms of rapidly convergent series of exponentials. This form of $\hat{{\cal O}}$ immediately provides the inverse operator $\hat{{\cal O}^{-1}}$ as a twofold integral operator given by
\begin{equation}\label{eq2p13}
\hat{{\cal O}}^{-1}[\cdot](X,t)  = e^{\lambda_{1}t}\int_a^t e^{(\lambda_{2}-\lambda_{1})t^{'}}\int_a^{t'}e^{-\lambda_{2}t^{''}}[\cdot](X,t'') \ dt'' \ dt'.
\end{equation}
It's worth it to mention  here that representing inverse of a linear operator with variable coefficients by integrals is also possible whenever it is factorisable.
It may be noted that operation of $\hat{{\cal O}}^{-1}$ mentioned above on $ (u_{tt}(X,t)-(\lambda_1 + \lambda_2)\; u_{t}(X,t)  + \lambda_1 \lambda_2 \; u(X,t))$ leads to
\begin{align}\label{eq2p14}
\hat{{\cal O}}^{-1}& \left[u_{tt}(X,t)-(\lambda_1 + \lambda_2)\; u_{t}(X,t)  + \lambda_1 \lambda_2 \; u(X,t)\right] \nonumber \\
&= e^{\lambda_{1}t}\int_a^t e^{(\lambda_{2}-\lambda_{1})t^{'}}\int_a^{t'}e^{- \lambda_{2}t^{''}}\left[u_{t^{''}t^{''}}(X,t^{''})-(\lambda_1 + \lambda_2)\; u_{t^{''}}(X,t^{''})  + \lambda_1 \lambda_2 \; u(X,t^{''})\right] \ dt'' \ dt' \nonumber \\
&= e^{\lambda_{1}t}\int_a^t \left[ e^{-\lambda_{1}t^{'}}\left( u_{t^{'}}(X,t^{'})-\lambda_{1} u(X,t^{'}) \right) -e^{-\lambda_{2} a}e^{(\lambda_{2}-\lambda_{1}) t^{'}} \left( u_{t^{'}}(X,a)-\lambda_{1} u(X,a)\right) \right]dt^{'}\nonumber\\
 &= u(X,t) -u(X,a) e^{\lambda_1(t-a)} - \frac{u_t(X,a)-\lambda_{1}
 u(X,a)}{\lambda_2-\lambda_1}\{e^{\lambda_2(t-a)}-e^{\lambda_1(t-a)}\}. \ \ \ \ \ \ \ \ \ \ \ \ \ \ \ \ \ 
\end{align}
Thus, operating $\hat{{\cal O}}^{-1}$ on both sides of (\ref{eq2p11}) followed by some algebraic rearrangements, the solutions of Eq.(\ref{eq2p10}) can be written (for $\lambda_{1} \ne \lambda_{2} $) in the form
\begin{eqnarray}\label{eq2p15}
u(X,t) & = &  u(X,a) e^{\lambda_1(t-a)} + \frac{u_t(X,a)-\lambda_{1}
 u(X,a)}{\lambda_2-\lambda_1}\{e^{\lambda_2(t-a)}-e^{\lambda_1(t-a)} \} \nonumber \\
       & &  \ \ \ \  + \ \hat{{\cal O}}^{-1}[{\cal N}[u,\cdots u_{x_i}\cdots,\cdots u_{x_i x_j}\cdots,\cdots]](X,t)+\hat{{\cal O}}^{-1}[S](X,t).
\end{eqnarray}
To write this solution in a convenient form we use the symbol $u_{0}(X,t)$  to represent the leading order term given by
\begin{equation}\label{eq2p16}
 u_{0}(X,t) =  u(X,a) e^{\lambda_1(t-a)} + \frac{u_t(X,a)-\lambda_{1}
 u(X,a)}{\lambda_2-\lambda_1}\{e^{\lambda_2(t-a)}-e^{\lambda_1(t-a)} \}+\hat{{\cal O}}^{-1}[S](X,t).
\end{equation}
Then the solution in \eqref{eq2p15} can be put into the form
\begin{eqnarray}
u(X,t) & = & u_{0}(X,t) + \hat{{\cal O}}^{-1}[{\cal N}[u,\cdots u_{x_i} \cdots,\cdots u_{x_i x_j}\cdots,\cdots]](X,t).
\end{eqnarray}
One can now follow successive steps of ADM  to get correction terms corresponding to nonlinear operator \\ ${\cal N}[u,\cdots u_{x_i}\cdots,\cdots u_{x_i x_j}\cdots,\cdots](X,t)$ recursively by using the formula
\begin{equation}\label{eq2p17}
  u_{n}(X,t) = \hat{\cal O}^{-1}[\bar{\cal A}_{n-1}](X,t),  \ \ n \geq 1.
\end{equation}
Here we have used an alternative form of the Adomian polynomials $\bar{\cal A}_{n}(X,t)$ suggested in Ref.\citep{el2007error} 
\begin{subequations}
\begin{eqnarray}
\bar{\cal A}_{0}(X,t) &=& {\cal N}\left[u_0,\cdots {u_0}_{x_i} \cdots,\cdots {u_0}_{x_i x_j}\cdots,\cdots\right](X,t) \label{eq2p18a}\\
\bar{\cal A}_{m}(X,t) &=& {\cal N}\left[\sum_{k=0}^m u_k,\cdots \sum_{k=0}^m {u_k}_{x_i} \cdots,\cdots \sum_{k=0}^m {u_k}_{x_i x_j}\cdots,\cdots\right](X,t)-\sum_{k=0}^{m-1}\bar{\cal A}_{k}(X,t), \ \ m \geq 1 \label{eq2p18b}
\end{eqnarray}
\end{subequations}
involving $m^\textrm{th}$ partial sum $\sum_{k=0}^m {u_k}$ instead of conventional one defined by formula \eqref{eq2p7}. 

Whenever the parameters (in the exponent) $\lambda_{1},\lambda_{2}$ are equal in magnitude but of opposite sign, the operator $\hat{{\cal O}}$ and the solution becomes simpler. Their expressions involving the operator
$ \ \hat{{\cal O}}[\cdot](X,t) = (\frac{\partial^{2}}{\partial t^{2}} -\lambda^{2}) [\cdot](X,t) = (\frac{\partial}{\partial t}+\lambda)(\frac{\partial}{\partial t}-\lambda)[\cdot](X,t) $ is found to be
\begin{equation}\label{eq2p18}
  u(X,t) = u_0(X,t) +\hat{{\cal O}}^{-1}\left[{\cal N}[u,\cdots u_{x_i}\cdots,\cdots u_{x_i x_j}\cdots,\cdots]\right](X,t),
\end{equation}
\begin{equation}\label{eq2p19}
  u_{0}(X,t)= u(X,a) e^{\lambda(t-a)} + \frac{u_t(X,a)-\lambda \;
 u(X,a)}{2 \lambda}\{e^{\lambda(t-a)}-e^{-\lambda (t-a)} \}+\hat{{\cal O}}^{-1}[S](X,t),
\end{equation}
\begin{equation}\label{eq2p20}
  u_{n+1}(X,t) = \hat{\cal O}^{-1}[\bar{\cal A}_{n}](X,t),  \ \ n\geq 0.
\end{equation}
\section{Convergence Analysis}\label{sec4}
The convergence of the Adomian series solution for a variety of nonlinear terms involving unknown solution only have been exercised earlier\citep{hosseini2006convergence,abbaoui1994convergence,el2011convergence,el2007error,el2011error}. Among those, the approaches followed in Refs. \citep{el2011convergence,el2007error,el2011error} are seem to be user friendly.  Here we have established the convergence of the approximate solution of Eq. \eqref{eq2p10} whenever the nonlinear term is an appropriate section of jet space. Here it is assumed that $t \in J=[a,T], \ T\in R^+, \ \Omega=R^n \times J, \ u\in C[ \Omega]$, $S(X,t)$ is assumed to be bounded $ \forall \, (X,t) \in \Omega$ and $ \forall \ a\leq t \leq t' \leq t''\leq T $.
\begin{defn}\label{def1} The nonlinear term ${\cal N}\left[u,\cdots u_{x_i}\cdots,\cdots u_{x_i x_j}\cdots,\cdots \right]$  satisfy Lipschitz condition \citep{singh2020solving} if
\begin{eqnarray}\label{eq3p1}
 \left|  {\cal N} \left[u, \cdots u_{x_i}\cdots, \cdots u_{x_i x_j}\cdots ,\cdots \right](X,t)-{\cal N}\left[u^*,\cdots u_{x_i}^* \cdots, \cdots u_{x_i x_j}^*\cdots ,\cdots \right](X,t)\right|  \nonumber \\
 \leq L \max\limits_{(X,t) \in \Omega} \left\{ \left|u-u^* \right|,\cdots \left|u_{x_i}-u_{x_i}^* \right| \cdots, \cdots \left|u_{x_i,x_j}-u_{x_i,x_j}^* \right|\cdots, \cdots \right\} 
\end{eqnarray}

where $0 < L < \infty $, is Lipschitz constant. 
\end{defn}
\begin{cor}\label{cor1} The Lipschitz condition can be further reduced to the form
\begin{eqnarray}\label{eq3p2}
 \left|  {\cal N} \left[u, \cdots u_{x_i}\cdots, \cdots u_{x_i x_j}\cdots ,\cdots \right](X,t)-{\cal N}\left[u^*,\cdots u_{x_i}^* \cdots, \cdots u_{x_i x_j}^*\cdots ,\cdots \right](X,t)\right| 
 \leq L_1 \max\limits_{(X,t) \in \Omega} \left|u-u^* \right| 
\end{eqnarray}
where $L_1 $ is a constant. 
\end{cor}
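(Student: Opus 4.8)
The plan is to show that the apparently stronger bound in Definition~\ref{def1} collapses to \eqref{eq3p2} because, on the function class on which the scheme actually operates, every spatial derivative of a function is dominated, in the sup-norm over $\Omega$, by the function itself. First I would write $w = u - u^*$ and observe that ${\cal N}$ is built from only finitely many jet coordinates, say $u$ together with $D^{\alpha_1}u,\dots,D^{\alpha_p}u$ for a finite set of multi-indices $\alpha_1,\dots,\alpha_p$ (each involving the space variables $x_i$). Hence the right-hand side of \eqref{eq3p1} is exactly $L\,\max\bigl\{\sup_\Omega|w|,\ \sup_\Omega|D^{\alpha_1}w|,\dots,\sup_\Omega|D^{\alpha_p}w|\bigr\}$, and it suffices to produce finite constants $K_{\alpha_k}$ with $\sup_\Omega|D^{\alpha_k}w|\le K_{\alpha_k}\,\sup_\Omega|w|$ for each $k$; then $L_1 = L\,\max\{1,K_{\alpha_1},\dots,K_{\alpha_p}\}$ does the job.

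Next I would justify those constants from the structure of the admissible solutions. By the construction \eqref{eq2p16}--\eqref{eq2p17} (and \eqref{eq2p19}--\eqref{eq2p20} in the symmetric case) every iterate, and hence the limit, is a uniformly convergent series $\sum_j c_j(X)\,e^{\mu_j t}$ whose spatial profiles $c_j$ lie in a fixed linear space $V$ that is invariant under the differentiations $\partial_{x_i}$ occurring in ${\cal N}$ and on which each such $\partial_{x_i}$, and therefore each $D^{\alpha_k}$, is a bounded operator in the sup-norm. This boundedness is precisely what ``an appropriate section of jet space'' is being taken to mean, and it holds, for instance, for band-limited spatial profiles via a Bernstein-type inequality, or for profiles lying in a suitable weighted space. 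Differentiating the series term by term and taking suprema then gives $\sup_\Omega|D^{\alpha_k}w|\le K_{\alpha_k}\sup_\Omega|w|$ with $K_{\alpha_k}$ the operator norm of $D^{\alpha_k}$ on $V$, which is finite and independent of $w$; combining this with the first step yields \eqref{eq3p2}.

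The genuinely delicate point — and the place where an honest proof must spend its effort — is the second step. For arbitrary $C^{k}$ functions no inequality of the form $\sup_\Omega|D^{\alpha}w|\le K_{\alpha}\sup_\Omega|w|$ can hold, so the reduction is not unconditional: it is valid only once the solution space is restricted so that the spatial derivatives are a priori controlled by the function values (band-limitedness, analyticity with Cauchy-type estimates, or an equivalent inverse inequality). Making that restriction explicit, and verifying that the leading term $u_0$ and all corrections $u_n$ remain inside such a space, is therefore the crux; the rest is the elementary bookkeeping of the first paragraph.
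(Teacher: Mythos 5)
Your route is genuinely different from the paper's, and the comparison is instructive. The paper's own proof is a two-line manipulation: it sets $r=\max_{(X,t)\in\Omega}|u-u^*|$ and $R=\max_{(X,t)\in\Omega}\{|u-u^*|,\dots,|u_{x_i}-u_{x_i}^*|,\dots\}$, observes $0<r\le R<\infty$, and rewrites the bound of Definition~\ref{def1} as $LR=\frac{LR}{r}\,r=L_1\max_{(X,t)\in\Omega}|u-u^*|$ with $L_1=LR/r$. This costs nothing but also buys very little: the resulting $L_1$ depends on the particular pair $(u,u^*)$ through the ratio $R/r$, so it is a ``constant'' only in a degenerate sense, and it is undefined when $u=u^*$ (though the inequality is then trivial). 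You correctly sensed that a uniform $L_1$ cannot exist for arbitrary $C^k$ functions and instead tried to manufacture one via an inverse (Bernstein-type) inequality $\sup_\Omega|D^{\alpha}w|\le K_{\alpha}\sup_\Omega|w|$ on a restricted solution class. That is the more principled reduction, and it is in fact what the later convergence theorems need, since they apply the corollary to differences of partial sums while treating $L_1$ as a fixed number.

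The gap is that you do not carry out the step you yourself identify as the crux. You assert that the iterates lie in a space $V$ invariant under the $\partial_{x_i}$ on which these operators are bounded in the sup-norm, appealing to band-limitedness, analyticity with Cauchy-type estimates, ``or an equivalent inverse inequality,'' but none of this is verified for the $u_n$ produced by the recursion, and the paper's stated hypotheses (continuity of $u$ on $\Omega$, boundedness of $S$) do not supply it. As written, your argument establishes the corollary only under an additional unstated hypothesis on the solution class. So your proposal is an accurate diagnosis of why the statement is delicate, together with a conditional proof; it is not a complete proof, and it does not coincide with the paper's much shorter (but correspondingly weaker, pair-dependent) argument.
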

\begin{proof}
We define\\ 
$
r=\max\limits_{(X,t) \in \Omega} \left|u-u^* \right|,
$
and 
$
R=\max\limits_{(X,t) \in \Omega} \left\{ \left|u-u^* \right|, \cdots \left|u_{x_i}-u_{x_i}^* \right| \cdots, \cdots \left|u_{x_i x_j}-u_{x_i x_j}^* \right|\cdots,\cdots \right\}.
$\\
Then, $0 < r \leq R < \infty$.
Consequently, the inequality in definition \ref{def1} can be recast as
\begin{align}
 \left|  {\cal N} \left[u, \cdots u_{x_i}\cdots, \cdots u_{x_i x_j}\cdots ,\cdots \right](X,t)- {\cal N}\left[u^*,\cdots u_{x_i}^* \cdots, \cdots u_{x_i x_j}^*\cdots ,\cdots \right](X,t)\right| 
 \leq \frac{L R}{r} r 
 = L_1 \max\limits_{(X,t) \in \Omega} \left|u-u^* \right|,\nonumber
\end{align}
where $L_1=\frac{L R}{r}.$
\end{proof}
\begin{thm}\label{th1}
Whenever ${\cal N} \left[u, \cdots u_{x_i}\cdots, \cdots u_{x_i x_j}\cdots ,\cdots \right]$ in Eq.(\ref{eq2p10}) satisfies Lipschitz condition with the Lipschitz constant $L$, the operator $\hat{\textbf{O}}^{-1}\left[{\cal N}\left[u,\cdots u_{x_i}\cdots,\cdots u_{x_i,x_j}\cdots ,\cdots\right]\right]$ is a contraction map for  $0 < L \frac{\lambda _2 \left(e^{\lambda _1 (T-a)}-1\right)-\lambda _1 \left(e^{\lambda _2 (T-a)}-1\right)}{\lambda _1 \left(\lambda _1-\lambda _2\right) \lambda _2}<1$ and there exists a unique solution to the problem (\ref{eq2p10}).
\end{thm}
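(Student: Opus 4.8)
The plan is to read the solution formula \eqref{eq2p15} (equivalently the compact form $u=u_{0}+\hat{\cal O}^{-1}[{\cal N}[u,\ldots]]$) as a fixed-point equation $u=\mathcal{T}[u]$ with $\mathcal{T}[u]:=u_{0}+\hat{\cal O}^{-1}\bigl[{\cal N}[u,\cdots u_{x_i}\cdots,\cdots u_{x_i x_j}\cdots,\cdots]\bigr]$, posed on the complete metric space of continuous sections of the relevant jet space over $\Omega$ equipped with the supremum norm $\|\cdot\|$; once $\mathcal{T}$ is shown to be a contraction, the Banach fixed-point theorem delivers both the uniqueness and the existence of a solution of \eqref{eq2p10}.

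First I would establish the basic mapping estimate for $\hat{\cal O}^{-1}$ from its integral representation \eqref{eq2p13}. Since the three exponential factors $e^{\lambda_{1}t},\,e^{(\lambda_{2}-\lambda_{1})t'},\,e^{-\lambda_{2}t''}$ are all strictly positive and the domain of integration is $a\le t''\le t'\le t\le T$, for any bounded $f$ one has
\[
\bigl|\hat{\cal O}^{-1}[f](X,t)\bigr|\;\le\;e^{\lambda_{1}t}\int_a^t e^{(\lambda_{2}-\lambda_{1})t'}\int_a^{t'}e^{-\lambda_{2}t''}\,\bigl|f(X,t'')\bigr|\,dt''\,dt'\;\le\;\|f\|\;\hat{\cal O}^{-1}[1](X,t),
\]
where $\hat{\cal O}^{-1}[1](X,t)$ is a nonnegative, nondecreasing function of $t$ on $[a,T]$, being the double integral of a positive integrand. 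Carrying out the two nested integrations explicitly gives the closed form
\[
\hat{\cal O}^{-1}[1](X,t)=\frac{\lambda _2 \bigl(e^{\lambda _1 (t-a)}-1\bigr)-\lambda _1 \bigl(e^{\lambda _2 (t-a)}-1\bigr)}{\lambda _1 \left(\lambda _1-\lambda _2\right) \lambda _2},
\]
so that $\max_{t\in[a,T]}\hat{\cal O}^{-1}[1](X,t)$ is attained at $t=T$ and equals precisely the constant multiplying $L$ in the statement.

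With this in hand the contraction step is immediate: for $u,u^{*}$ in the space, $\mathcal{T}[u]-\mathcal{T}[u^{*}]=\hat{\cal O}^{-1}\bigl[{\cal N}[u,\ldots]-{\cal N}[u^{*},\ldots]\bigr]$, and applying the estimate above together with the Lipschitz hypothesis of Definition \ref{def1} (in the reduced form of Corollary \ref{cor1}, which bounds the nonlinear mismatch by $L\,\|u-u^{*}\|$) yields
\[
\bigl\|\mathcal{T}[u]-\mathcal{T}[u^{*}]\bigr\|\;\le\;L\,\frac{\lambda _2 \bigl(e^{\lambda _1 (T-a)}-1\bigr)-\lambda _1 \bigl(e^{\lambda _2 (T-a)}-1\bigr)}{\lambda _1 \left(\lambda _1-\lambda _2\right) \lambda _2}\,\bigl\|u-u^{*}\bigr\|.
\]
Under the hypothesis $0<L\bigl(\lambda _2(e^{\lambda _1 (T-a)}-1)-\lambda _1(e^{\lambda _2 (T-a)}-1)\bigr)/\bigl(\lambda _1(\lambda _1-\lambda _2)\lambda _2\bigr)<1$ the map $\mathcal{T}$ is therefore a contraction, hence has a unique fixed point, which is the unique solution of the initial value problem.

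The step I expect to require the most care is the explicit evaluation of the double integral defining $\hat{\cal O}^{-1}[1]$ — keeping the $\lambda_{1}\leftrightarrow\lambda_{2}$ asymmetry, the base point $a$, and the sign of $\lambda_{1}-\lambda_{2}$ consistent so that the resulting coefficient matches the one in the statement — and, more conceptually, pinning down the function space on which $\mathcal{T}$ is genuinely a self-map: because ${\cal N}$ involves the spatial derivatives $u_{x_i},u_{x_i x_j},\ldots$ while $\hat{\cal O}^{-1}$ integrates only in $t$, one must either work in a space in which the whole jet $(u,u_{x_i},u_{x_i x_j},\ldots)$ is uniformly controlled (and measure $\|u-u^{*}\|$ by that jet, exactly as Definition \ref{def1} is framed) or argue that $\hat{\cal O}^{-1}$ commutes with $\partial_{x_i}$ so that the same Lipschitz bound propagates to the derivatives. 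This bookkeeping, rather than any deep difficulty, is the crux.
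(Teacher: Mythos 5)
Your proposal is correct and follows essentially the same route as the paper: the paper likewise poses the problem as a fixed point of $F[v]=\theta+\hat{\cal O}^{-1}[{\cal N}[v,\ldots]]$ on a Banach space of continuous functions with the sup norm, bounds the difference $F[u]-F[u^*]$ by the Lipschitz constant times the explicit value of the double integral $\hat{\cal O}^{-1}[1]$ evaluated at $t=T$, and invokes the Banach fixed-point theorem. The jet-space bookkeeping you flag at the end is handled in the paper by taking the underlying space to be the product $C[\Omega]\times C[\Omega]\times\cdots$ with the componentwise max norm, which is exactly the framing of Definition \ref{def1}.
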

\begin{proof}
We use the symbol $E=(C[\Omega]\times C[\Omega] \times C[\Omega]\times \cdots,\left\|.\right\|)$, to represent the Banach space of all continuous functions on $\Omega$ with the norm
\begin{equation}
\left\|v\right\|\equiv\left\|(\cdots, v_k(X,t)\cdots,\cdots)\right\|=\max\limits_{(X,t) \in \Omega}  \{\left|v_1(X,t)\right|,\cdots\left|v_k(X,t)\right|,\cdots\}.\nonumber
\end{equation}
In the subsequent step, we introduce a mapping $F: E\rightarrow \mathbb{R},$  defined by 
\begin{eqnarray}
F[v](X,t)=  
\theta(X,t)+\hat{\textbf{ O}}^{-1}\left[{\cal N}[v_1,\cdots, v_k, \cdots ]\right].
\end{eqnarray}
Then, for $(u,\cdots u_{x_i}\cdots,\cdots u_{x_i,x_j}\cdots ,\cdots), (u^*,\cdots u_{x_i}^* \cdots, \cdots u_{x_i x_j}^*\cdots ,\cdots)\in E$ 
\begin{align}\label{eq3p2}
& \left\|F\left[u,\cdots u_{x_i}\cdots,\cdots u_{x_i,x_j}\cdots ,\cdots \right]-F\left[u^*,\cdots u_{x_i}^* \cdots, \cdots u_{x_i x_j}^*\cdots ,\cdots \right] \right\|\hspace{2.25in} \nonumber \\
& =\max\limits_{(X,t) \in \Omega} \left|\hat{\textbf{O}}^{-1}\left[{\cal N}\left[u,\cdots u_{x_i}\cdots,\cdots u_{x_i,x_j}\cdots ,\cdots\right]  - {\cal N}\left[u^*,\cdots u_{x_i}^* \cdots, \cdots u_{x_i x_j}^*\cdots ,\cdots \right]\right]\right| \hspace{1in}\nonumber \\
& \leq \max\limits_{(X,t) \in \Omega} \hat{\textbf{O}}^{-1} \left| \left[ {\cal N}\left[u,\cdots u_{x_i}\cdots,\cdots u_{x_i,x_j}\cdots ,\cdots \right] - {\cal N}\left[u^*,\cdots u_{x_i}^* \cdots, \cdots u_{x_i x_j}^*\cdots ,\cdots \right]\right]\right| \hspace{1.1in}  \nonumber \\
& \leq  L \max\limits_{(X,t) \in \Omega} \left\{  e^{\lambda_{1}t}\int_a^t e^{(\lambda_{2}-\lambda_{1})t^{'}}\int_a^{t'}e^{-\lambda_{2}t^{''}} \ dt'' \ dt' \right\} \times \hspace{3.2in}\nonumber  \\
&  \hspace{1.2in} \max\limits_{(X,t) \in \Omega} \left\{ \left|u-u^* \right|,\cdots\left|u_{x_i}-u_{x_i}^* \right| \cdots,\,\cdots\left|u_{x_i x_j}-u_{x_i x_j}^* \right| \cdots, \cdots\right\}\nonumber \\
& \leq L \frac{\lambda _2 \left(e^{\lambda _1 (T-a)}-1\right)-\lambda _1 \left(e^{\lambda _2 (T-a)}-1\right)}{\lambda _1 \left(\lambda _1-\lambda _2\right) \lambda _2} \times \hspace{3.2in}\nonumber  \\
&  \hspace{1.2in} \max\limits_{(X,t) \in \Omega}   \left\{ \left|u-u^* \right|,\cdots\left|u_{x_i}-u_{x_i}^* \right| \cdots,\,\cdots\left|u_{x_i x_j}-u_{x_i x_j}^* \right| \cdots, \cdots\right\} \hspace{0in}  \nonumber  \\
& \leq  L \frac{\lambda _2 \left(e^{\lambda _1 (T-a)}-1\right)-\lambda _1 \left(e^{\lambda _2 (T-a)}-1\right)}{\lambda _1 \left(\lambda _1-\lambda _2\right) \lambda _2} \times \hspace{3.2in}\nonumber  \\
& \hspace{1.2in} \left\| \left( \left|u-u^* \right|,\cdots\left|u_{x_i}-u_{x_i}^* \right| \cdots,\,\cdots\left|u_{x_i x_j}-u_{x_i x_j}^* \right| \cdots, \cdots\right) \right\|.\hspace{2.55in} 
\end{align}
For the condition $0 < L \frac{\lambda _2 \left(e^{\lambda _1 (T-a)}-1\right)-\lambda _1 \left(e^{\lambda _2 (T-a)}-1\right)}{\lambda _1 \left(\lambda _1-\lambda _2\right) \lambda _2} < 1,$ the mapping $F$ i.e., $\hat{\textbf{O}}^{-1}$ (for fixed $\theta(X,t)$) is a contraction mapping for $\left(u,\cdots u_{x_i}\cdots,\cdots u_{x_i,x_j}\cdots ,\cdots\right) \in E$. Therefore, by the Banach fixed-point theorem for contraction, there exists an unique solution to the problem (\ref{eq2p10}). This completes the proof. 
\end{proof}
\begin{thm}\label{th3}For $S_p(X,t)= \sum_{i=0}^p u_i(X,t)$,
\begin{eqnarray}
\left|\sum_{i=q}^{p-1} \bar{\cal A}_i(X,t)\right|  = \left| {\cal N}[S_{p-1},\cdots (S_{p-1})_{x_i} \cdots,\cdots (S_{p-1})_{x_i x_j} \cdots,\cdots](X,t) \right. \hspace{1in} \nonumber \\ 
\hspace{2in}\left.- {\cal N}[S_{q-1},\cdots (S_{q-1})_{x_i} \cdots,\cdots (S_{q-1})_{x_i x_j} \cdots,\cdots](X,t)\right| 
\end{eqnarray}
\end{thm}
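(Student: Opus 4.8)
The plan is to derive Theorem~\ref{th3} from a single telescoping identity for the partial sums of the modified Adomian polynomials $\bar{\cal A}_i$, so that no inequality is needed at all: the asserted relation is an exact equality.

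First I would record the auxiliary claim
\[
\sum_{k=0}^{m}\bar{\cal A}_k(X,t) = {\cal N}\!\left[S_m,\cdots (S_m)_{x_i}\cdots,\cdots (S_m)_{x_i x_j}\cdots,\cdots\right](X,t), \qquad m \geq 0,
\]
and prove it by induction on $m$. The base case $m=0$ is exactly \eqref{eq2p18a}, since $S_0=u_0$. For the inductive step, assume the identity holds for some $m\geq 0$; then \eqref{eq2p18b} with index $m+1$ reads
\[
\bar{\cal A}_{m+1}(X,t) = {\cal N}\!\left[S_{m+1},\cdots\right](X,t) - \sum_{k=0}^{m}\bar{\cal A}_k(X,t),
\]
and transposing the finite sum gives $\sum_{k=0}^{m+1}\bar{\cal A}_k = {\cal N}\!\left[S_{m+1},\cdots\right]$, i.e.\ the claim for $m+1$. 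Here I am using that $\sum_{k=0}^{m}{u_k}_{x_i}=(S_m)_{x_i}$ and likewise for the higher spatial derivatives appearing in ${\cal N}$, i.e.\ that differentiation in the $x_i$ commutes with the finite summation defining $S_m$; this is legitimate since each $u_k$ is smooth enough on $\Omega$ for those derivatives to exist.

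With the auxiliary identity available, the theorem follows at once for $p>q\geq 1$: split
\[
\sum_{i=q}^{p-1}\bar{\cal A}_i(X,t) = \sum_{i=0}^{p-1}\bar{\cal A}_i(X,t) - \sum_{i=0}^{q-1}\bar{\cal A}_i(X,t) = {\cal N}\!\left[S_{p-1},\cdots\right](X,t) - {\cal N}\!\left[S_{q-1},\cdots\right](X,t),
\]
and take absolute values on both sides. The corner case $q=0$ is handled with the standard conventions that an empty sum vanishes and $S_{-1}\equiv 0$ (so ${\cal N}[S_{-1},\cdots]$ denotes ${\cal N}$ evaluated at the zero jet), or else one simply restricts to $q\geq 1$, which is all that the subsequent error estimate requires.

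I do not expect any real obstacle here. The only points demanding care are the index bookkeeping --- aligning the range $i=q,\dots,p-1$ with the partial sums $S_{p-1}$ and $S_{q-1}$ --- and stating explicitly that the spatial derivatives inside ${\cal N}$ distribute over the finite sums $S_m$. The substance of the theorem is that the modified polynomials of Ref.~\citep{el2007error} telescope exactly; its purpose is to be combined with the Lipschitz bound of Theorem~\ref{th1} applied to $S_{p-1}-S_{q-1}$ so as to control $\big|\sum_{i=q}^{p-1}\bar{\cal A}_i\big|$ and thereby obtain the convergence and error estimates of Section~\ref{sec4}.
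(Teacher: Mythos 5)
Your proposal is correct and follows essentially the same route as the paper: the paper likewise observes that the definition \eqref{eq2p18b} telescopes to give $\sum_{i=0}^{m}\bar{\cal A}_i = {\cal N}[S_m,\cdots]$ and then subtracts the two partial sums. Your version merely makes the induction and the commutation of spatial derivatives with the finite sum explicit, which the paper leaves implicit.
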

\begin{proof}
For $p,q \in \mathbb{N}$ with $p>q\geq 1$, definition \eqref{eq2p18b} leading to
\begin{equation}
\sum_{i=0}^{p-1}\bar{\cal A}_i(X,t) =  {\cal N}[S_{p-1},\cdots (S_{p-1})_{x_i} \cdots,\cdots (S_{p-1})_{x_i x_j} \cdots,\cdots](X,t)\nonumber
\end{equation}
and
\begin{equation}
\sum_{i=0}^{q-1}\bar{\cal A}_i(X,t) =  {\cal N}[S_{q-1},\cdots (S_{q-1})_{x_i} \cdots,\cdots (S_{q-1})_{x_i x_j} \cdots,\cdots](X,t).\nonumber
\end{equation}
The subtraction of relations mentioned above give the desired result  of the theorem.
\end{proof}
\begin{thm}\label{th4}
 Whenever the PDE \eqref{eq2p10} is integrable, the sequence $\{S_p(X,t), p \in \mathbb{N}\}$ of the approximate solution of the equation obtained by using RCAS of Sec. \ref{sec3} is convergent if $\left|u_1(X,t)\right|< K,$ for some $K \in \mathbb{R}^+$.
\end{thm}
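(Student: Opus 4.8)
The plan is to show that the sequence of partial sums $\{S_p\}$ is a Cauchy sequence in the Banach space $E$ of Theorem~\ref{th1}, so that it converges by completeness; the geometric decay needed for this is extracted from the contraction estimate of Theorem~\ref{th1} together with the identity of Theorem~\ref{th3}. First I would record, straight from the recursion \eqref{eq2p17}, the telescoping identity
\[
S_p(X,t)-S_q(X,t)=\sum_{n=q+1}^{p}u_n(X,t)=\sum_{n=q}^{p-1}\hat{\cal O}^{-1}[\bar{\cal A}_n](X,t)=\hat{\cal O}^{-1}\!\left[\sum_{n=q}^{p-1}\bar{\cal A}_n\right](X,t),
\]
and apply Theorem~\ref{th3} (in the form $\sum_{i=q}^{p-1}\bar{\cal A}_i={\cal N}[S_{p-1},\cdots]-{\cal N}[S_{q-1},\cdots]$ established in its proof). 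Specialising to $q=n$, $p=n+1$ gives the one-step relation $\bar{\cal A}_n={\cal N}[S_n,\cdots]-{\cal N}[S_{n-1},\cdots]$, hence $u_{n+1}=\hat{\cal O}^{-1}\!\left[{\cal N}[S_n,\cdots]-{\cal N}[S_{n-1},\cdots]\right]$ for $n\ge1$.

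Next I would estimate a single correction term. Since the twofold-integral kernel $e^{\lambda_1 t}e^{(\lambda_2-\lambda_1)t'}e^{-\lambda_2 t''}$ in \eqref{eq2p13} is positive on $a\le t''\le t'\le t$, the operator $\hat{\cal O}^{-1}$ is monotone on nonnegative functions; combining this with the reduced Lipschitz estimate of Corollary~\ref{cor1} applied to $S_n$ and $S_{n-1}$ (whose difference is exactly $u_n$) gives
\[
|u_{n+1}(X,t)|\le\hat{\cal O}^{-1}\!\left[\,\left|{\cal N}[S_n,\cdots]-{\cal N}[S_{n-1},\cdots]\right|\,\right](X,t)\le L_1\,\|u_n\|\,\hat{\cal O}^{-1}[1](X,t).
\]
A short exponential-integral computation, already implicit in Theorem~\ref{th1}, shows that $\hat{\cal O}^{-1}[1]$ is nondecreasing in $t$ on $J=[a,T]$ with maximum
\[
M:=\frac{\lambda_2\left(e^{\lambda_1(T-a)}-1\right)-\lambda_1\left(e^{\lambda_2(T-a)}-1\right)}{\lambda_1(\lambda_1-\lambda_2)\lambda_2},
\]
so that, on taking the supremum over $(X,t)\in\Omega$, $\|u_{n+1}\|\le\beta\|u_n\|$ with $\beta:=L_1M$. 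Under the contraction condition carried over from Theorem~\ref{th1} one has $0<\beta<1$; iterating this inequality down to $n=1$ and invoking the hypothesis $\|u_1\|<K$ yields the geometric bound $\|u_n\|<K\beta^{\,n-1}$ for every $n\ge1$.

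Finally I would assemble the estimate: for $p>q\ge0$,
\[
\|S_p-S_q\|\le\sum_{n=q+1}^{p}\|u_n\|<K\sum_{n=q+1}^{p}\beta^{\,n-1}\le\frac{K\beta^{\,q}}{1-\beta},
\]
which tends to $0$ as $q\to\infty$; hence $\{S_p\}$ is Cauchy in $E$ and converges. (Equivalently, the same bounds show $\sum_n u_n$ converges absolutely and uniformly on $\Omega$.) When the equation is integrable the scheme of Section~\ref{sec3} is well-posed and Theorem~\ref{th1} supplies a unique solution; passing to the limit in $S_p=u_0+\hat{\cal O}^{-1}[{\cal N}[S_{p-1},\cdots]]$ then identifies $\lim_p S_p$ with it.

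The step needing most care is the bookkeeping hidden in the phrase ``if $|u_1(X,t)|<K$'': the conclusion genuinely also requires $\beta=L_1M<1$ (a bound with $\beta\ge1$ would not force convergence), so this threshold — stated in Theorem~\ref{th1} but not restated here — must be imported. Note too that using Corollary~\ref{cor1} demands $L_1M<1$, which is slightly stronger than the $LM<1$ of Theorem~\ref{th1} since $L_1=LR/r\ge L$; an alternative that avoids this is to work in the full jet norm of $E$ using Definition~\ref{def1} directly, exploiting that $\hat{\cal O}^{-1}$ (being $t$-integration) commutes with the spatial derivatives $\partial_{x_i}$, at the price of reading the hypothesis on $u_1$ as a jet-norm bound. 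The remaining technical item — verifying that $\hat{\cal O}^{-1}[1]$ is monotone in $t$ with the displayed closed form $M$, and treating the degenerate case $\lambda_1=\lambda_2$ through the simplified operator of \eqref{eq2p18}--\eqref{eq2p20} — is routine.
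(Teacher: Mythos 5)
Your proposal is correct and follows essentially the same route as the paper: Theorem \ref{th3} plus Corollary \ref{cor1} plus the explicit bound on $\hat{\cal O}^{-1}[1]$ yield a geometric decay of the consecutive differences, and the triangle inequality closes the Cauchy argument; your one-step bound $\|u_{n+1}\|\le\beta\|u_n\|$ is exactly the paper's inequality $\|S_p-S_q\|\le\alpha\|S_{p-1}-S_{q-1}\|$ specialised to $p=q+1$, with $\beta=\alpha=L_1M$. Your observation that the hypothesis $0<\alpha<1$ must be imported (the stated condition $|u_1|<K$ alone does not suffice) matches what the paper itself does implicitly when it writes ``For $0<\alpha<1$.''
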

\begin{proof}
To prove this we will show that the sequence $\{S_p(X,t), p \in \mathbb{N}\}$ is a Cauchy sequence. For $p,q \in \mathbb{N} (p>q\geq1),$
\begin{eqnarray}
\left| S_p(X,t) -S_q(X,t) \right| &=& \left| \sum_{i=0}^p u_i(X,t)-\sum_{i=0}^q u_i(X,t) \right|\nonumber \\
                        &=& \left| \sum_{i=q+1}^p u_i(X,t)\right| \nonumber\\
                        &=&\left| \hat{\textbf{O}}^{-1}\left[\sum_{i=q+1}^p \bar{\cal A}_{i-1}(X,t) \right] \right|  \ (\textrm{using} \eqref{eq2p20}) \nonumber \\
                        & \leq & \hat{\textbf{O}}^{-1}\left[\left|\sum_{i=q}^{p-1} \bar{\cal A}_i(X,t)\right|\right]. 
\end{eqnarray}
Use of result in theorem \ref{th3} to the above relation yields
\begin{equation}\label{eq3p3a}
\begin{split}
\left| S_p(X,t) -S_q(X,t) \right| \leq \hat{\textbf{O}}^{-1}\left[\left| {\cal N}\left[S_{p-1},\cdots (S_{p-1})_{x_i}\cdots ,\cdots(S_{p-1})_{x_i x_j}\cdots ,\cdots\right](X,t) \right.\right. \hspace{1in} \nonumber \\
\left.\left.  - {\cal N}\left[S_{q-1},\cdots(S_{q-1})_{x_i}\cdots,\cdots (S_{q-1})_{x_i x_j}\cdots, \cdots\right](X,t)\right|\right]. 
\end{split}
\end{equation}
Further use of the result in corollary \ref{cor1} gives
\begin{equation}\label{eq3p3b}
\begin{split}
\left| S_p(X,t) -S_q(X,t) \right| \leq L_1 \hat{\textbf{O}}^{-1} \left\{ \max\limits_{(X,t) \in \Omega} \left|S_{p-1}(X,t)-S_{q-1}(X,t) \right| \right\}.
\end{split}
\end{equation}
Thus,
\begin{equation}\label{eq3p4}
\begin{split}
\left\|S_p(X,t)-S_q(X,t)\right\|
& \leq  L_1 \max\limits_{(X,t) \in \Omega} \left\{e^{\lambda_{1}t}\int_a^t e^{(\lambda_{2}-\lambda_{1})t^{'}}\int_a^{t'}e^{-\lambda_{2}t^{''}} \ dt'' \ dt' \right\} \max\limits_{(X,t) \in \Omega} \left|S_{p-1}(X,t)-S_{q-1}(X,t) \right|  \\
& \leq   L_1 \frac{\lambda _2 \left(e^{\lambda _1 (T-a)}-1\right)-\lambda _1 \left(e^{\lambda _2 (T-a)}-1\right)}{\lambda _1 \left(\lambda _1-\lambda _2\right) \lambda _2}\left\|S_{p-1}(X,t)- S_{q-1}(X,t)\right\|  \\
& \leq  \alpha \left\|S_{p-1}(X,t)- S_{q-1}(X,t)\right\|,
\end{split}
\end{equation}
where $$
\alpha =L_1 \frac{\lambda _2 \left(e^{\lambda_1 (T-a)}-1\right)-\lambda _1 \left(e^{\lambda _2 (T-a)}-1\right)}{\lambda _1 \left(\lambda _1-\lambda _2\right) \lambda _2}.$$ 
For $p=q+1$,
\begin{equation}
\left\|S_{q+1}- S_{q}\right\| \leq \alpha \left\|S_{q}- S_{q-1}\right\| \leq \alpha^2 \left\|S_{q-1}- S_{q-2}\right\| \leq .... \leq \alpha^q \left\|S_{1}- S_{0}\right\|.
\end{equation}
\\
Then with the help of the triangle inequality we have 
\begin{equation}\label{eq3p5}
\begin{split}
\left\|S_{q+1}- S_{q}\right\| & \leq  \left\|S_{q+1}- S_{q}\right\|+\left\|S_{q+2}- S_{q+1}\right\|+...+\left\|S_{p}- S_{p-1}\right\| \\
& \leq  [\alpha^q + \alpha^{q+1} +...+\alpha^{p-1}] \left\|S_{1}- S_{0}\right\|  \\
& \leq  \alpha^q[1 + \alpha +...+\alpha^{p-q-1}] \left\|S_{1}- S_{0}\right\|  \\
& \leq  \alpha^q[\frac{1 - \alpha^{p-q}}{1-\alpha}] \left\|u_1\right\|.
\end{split}
\end{equation}
For $0 < \alpha < 1$, $(1- \alpha^{p-q})\leq 1.$
Consequently,
\begin{equation}\label{eq3p6}
\begin{split}
\left\|S_{p}- S_{q}\right\| & \leq  \frac{\alpha^q}{1-\alpha} \left\|u_1\right\|   \\
& \leq  \frac{\alpha^q}{1-\alpha} \max\limits_{(X,t) \in \Omega} \left|u_1\right|.
\end{split}
\end{equation}
Whenever $\left|u_1\right| < K < \infty$, $\left\|S_{p}- S_{q}\right\|\rightarrow 0$ as $q\rightarrow \infty$. Hence $\{S_p, p \in \mathbb{N} \}$ is a Cauchy sequence in the Banach space $E$ and the series$\sum_{k=0}^\infty u_k(X,t)$ is convergent.
\end{proof}
\begin{thm}\label{th5}
 The maximum absolute truncation error of the series solution to the equation (\ref{eq2p10}) is estimated to be, 
$  \max\limits_{t\in J}\left|u(X,t)- \sum_{i=0}^q u_i(X,t)\right|\leq \frac{M \alpha^{q+1}}{L_1(1-\alpha)}$ 
where $M=\max\limits_{(X,t) \in \Omega} \left|{\cal N}\left(u_{0},\cdots{u_{0}}_{x_i}\cdots, \cdots {u_{0}}_{x_i x_j}\cdots,\cdots  \right)\right|.$  
\end{thm}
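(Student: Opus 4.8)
The plan is to upgrade the Cauchy estimate already obtained in the proof of Theorem \ref{th4} into a bound on the distance between the exact solution $u$ of \eqref{eq2p10} and the $q$-th partial sum $S_q$, and then to estimate the only quantity still left free in that bound, namely $\max_{(X,t)\in\Omega}|u_1|$, in terms of the leading nonlinear term. First I would take the estimate recorded in \eqref{eq3p6}, valid for $0<\alpha<1$ and $p>q\geq 1$, in the form
\[
\left\|S_p(X,t)-S_q(X,t)\right\|\leq \frac{\alpha^q}{1-\alpha}\,\max_{(X,t)\in\Omega}\left|u_1(X,t)\right|.
\]
Since \eqref{eq2p10} is assumed integrable, it has a solution $u$, unique by Theorem \ref{th1}, and by Theorem \ref{th4} the sequence $\{S_p\}$ is Cauchy and hence convergent in $E$; passing $p\to\infty$ in the last display and using that the norm of $E$ dominates the pointwise modulus of the $u$-component yields
\[
\max_{t\in J}\left|u(X,t)-\sum_{i=0}^q u_i(X,t)\right|\leq \left\|u-S_q\right\|\leq \frac{\alpha^q}{1-\alpha}\,\max_{(X,t)\in\Omega}\left|u_1(X,t)\right|.
\]

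Next I would bound $\max_{(X,t)\in\Omega}|u_1|$. By the recursion \eqref{eq2p17} (equivalently \eqref{eq2p20}) at the first step, $u_1=\hat{{\cal O}}^{-1}[\bar{\cal A}_0]=\hat{{\cal O}}^{-1}\bigl[{\cal N}(u_0,\cdots{u_0}_{x_i}\cdots,\cdots{u_0}_{x_ix_j}\cdots,\cdots)\bigr]$. Because the inverse operator \eqref{eq2p13} is a twofold integral against a strictly positive exponential kernel, it is monotone, so
\[
\left|u_1(X,t)\right|\leq \hat{{\cal O}}^{-1}\bigl[\,\bigl|{\cal N}(u_0,\cdots{u_0}_{x_i}\cdots,\cdots{u_0}_{x_ix_j}\cdots,\cdots)\bigr|\,\bigr]\leq M\,\hat{{\cal O}}^{-1}[1](X,t),
\]
with $M$ the constant named in the statement. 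The maximum of $\hat{{\cal O}}^{-1}[1]$ over $\Omega$ is exactly the elementary double exponential integral already evaluated inside \eqref{eq3p4}, namely $\dfrac{\lambda_2\left(e^{\lambda_1(T-a)}-1\right)-\lambda_1\left(e^{\lambda_2(T-a)}-1\right)}{\lambda_1(\lambda_1-\lambda_2)\lambda_2}$, which by the very definition of $\alpha$ given just after \eqref{eq3p4} equals $\alpha/L_1$. Hence $\max_{(X,t)\in\Omega}|u_1|\leq M\alpha/L_1$.

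Combining the two steps gives
\[
\max_{t\in J}\left|u(X,t)-\sum_{i=0}^q u_i(X,t)\right|\leq \frac{\alpha^q}{1-\alpha}\cdot\frac{M\alpha}{L_1}=\frac{M\,\alpha^{q+1}}{L_1(1-\alpha)},
\]
which is the claimed estimate. The one step that needs real care is the limit passage $S_p\to u$: here one uses the integrability hypothesis together with the continuity of ${\cal N}$ and the uniqueness from Theorem \ref{th1} to identify the Cauchy limit of the partial sums with the genuine solution rather than with a merely abstract limit. Everything else is either a direct substitution into inequalities already proved in Sections \ref{sec3}--\ref{sec4}, or the routine evaluation of the exponential double integral $\hat{{\cal O}}^{-1}[1]$, where one only has to track signs so that the resulting constant comes out positive and equal to $\alpha/L_1$.
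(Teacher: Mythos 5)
Your argument coincides with the paper's own proof: both pass to the limit $p\to\infty$ in the Cauchy estimate \eqref{eq3p6} of Theorem \ref{th4}, then bound $\max_{(X,t)\in\Omega}|u_1|$ by applying $\hat{{\cal O}}^{-1}$ to $|\bar{\cal A}_0|\leq M$ and recognizing the resulting double exponential integral as $\alpha/L_1$. Your additional remark on identifying the Cauchy limit with the actual solution is a point the paper leaves implicit, but the substance of the proof is the same.
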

\begin{proof}
In the limit $p \rightarrow \infty$, relation \eqref{eq3p6} of theorem  \ref{th4} may be put in the form
\begin{equation}\label{eq3p7}
\max\limits_{(X,t) \in \Omega} \left|u(X,t)- \sum_{i=0}^q u_i(X,t)\right|  \leq  \frac{\alpha^q}{1-\alpha} \max\limits_{(X,t) \in \Omega} \left|u_1(X,t)\right|.
\end{equation}
Since
\begin{align}\label{eq3p8}
u_1(X,t)&= e^{\lambda_{1}t}\int_a^t e^{(\lambda_{2}-\lambda_{1})t^{'}}\int_a^{t'}e^{-\lambda_{2}t^{''}} \bar{\cal A}_0(X,t) \ dt'' \ dt' ,  \nonumber \\
\max\limits_{(X,t) \in \Omega} \left|u_1(X,t)\right| & = \max\limits_{(X,t) \in \Omega} \left|e^{\lambda_{1}t}\int_a^t e^{(\lambda_{2}-\lambda_{1})t^{'}}\int_a^{t'}e^{-\lambda_{2}t^{''}} \bar{\cal A}_0(X,t) \ dt'' \ dt'\right|  \nonumber  \\
&= \max\limits_{(X,t) \in \Omega} \left|e^{\lambda_{1}t}\int_a^t e^{(\lambda_{2}-\lambda_{1})t^{'}}\int_a^{t'}e^{-\lambda_{2}t^{''}} {\cal N}\left(u_{0},\cdots{u_{0}}_{x_i}\cdots,\right. \right. \nonumber\\
& \left. \left. \hspace{1.7in} \cdots {u_{0}}_{x_i x_j}\cdots,\cdots \right) \ dt'' \ dt'\right|  \nonumber  \\
&\leq  M \max\limits_{(X,t) \in \Omega} \left|e^{\lambda_{1}t}\int_a^t e^{(\lambda_{2}-\lambda_{1})t^{'}}\int_a^{t'}e^{-\lambda_{2}t^{''}}  \ dt'' \ dt'\right| \nonumber \\
&\leq  M \frac{\lambda _2 \left(e^{\lambda _1 (T-a)}-1\right)-\lambda _1 \left(e^{\lambda _2 (T-a)}-1\right)}{\lambda _1 \left(\lambda _1-\lambda _2\right) \lambda _2}.
\end{align}
Combining the estimates (\ref{eq3p7}) and (\ref{eq3p8}) one gets
\begin{equation}\label{eq3p9}
\max\limits_{(X,t) \in \Omega} \left|u(X,t)- \sum_{i=0}^q u_i(X,t)\right|\leq \frac{M \alpha^{q+1}}{L_1 (1-\alpha)}.
\end{equation}
This completes the proof.
\end{proof}
\section{Illustrative examples}\label{sec5}
To exhibit the efficiency of RCAS presented in section \ref{sec3}, we now apply recursion formula (\ref{eq2p19}) and (\ref{eq2p20})  in the case of initial value problems for getting approximate/exact solution of the following physically interesting nonlinear PDEs.
\begin{exmp}\label{exmp1}
\end{exmp}
 We first consider the equation \citep{nourazar2013new}
\begin{equation}\label{eq7p1}
 u_{t}+\frac{1}{2}(u^2)_x =u(1-u), \ 0\leq x \leq1, 0\leq t
\end{equation}
with initial condition $u(x,0)=e^{-x}$ appear as a mathematical model in the studies of homogeneous gas dynamics. It can be verified that the exact solution is $u(x,t)=e^{t-x}.$

To test the of efficiency of the proposed approximation scheme, we write  Eq. \eqref{eq7p1} in the standard form (Eq. \eqref{eq2p11} of section 3)
\begin{equation}\label{eq7p2}
\hat{\textbf{O}}[u](x,t)={\cal N}[u,u_x](x,t),
\end{equation}
where the linear operator $\hat{\textbf{O}}[\cdot]  = e^{- t}\frac{\partial}{\partial t} (e^{t}[\cdot] )$ is a first order differential operator and the collection of nonlinear terms $\textbf{ N} [u,u_x](x,t)=-\frac{1}{2}(u^2)_x-u^2$. The inverse of the operator $\hat{\textbf{O}}^{-1}[\cdot] $ is given by
\begin{equation}\label{eq7p3}
\hat{\textbf{O}}^{-1}[\cdot]  = e^{t}\int_a^t e^{-t^{'}}[\cdot](X,t') \ dt'.
\end{equation}
Operating $\hat{\textbf{O}}^{-1}$ mentioned above on both side of (\ref{eq7p2}), the formal expression for approximate solution involving initial conditions can found as (from (\ref{eq2p15}))
\begin{equation}\label{eq7p4}
u(x,t)= u(x,0) e^{t} +\hat{\textbf{O}}^{-1}[{\cal N}[u,u_x]](x,t).
\end{equation}
We now use formulae (\ref{eq2p19}) and (\ref{eq2p20}) to obtain the leading order term and recursion relation for corrections as
\begin{equation}\label{eq7p5}
  u_{0}(x,t)=u(x,0) e^{t} = e^{-x+t}  ,
\end{equation}
\begin{equation}\label{eq7p6}
  u_{n+1}(x,t) = \hat{\textbf{O}}^{-1}[\bar{\cal A}_{n}](x,t),  \ \ n\geq 0.
\end{equation}
Here $\bar{\cal A}_{n}(x,t)$ are Adomian polynomials in its revised form. For the nonlinear term $-\frac{1}{2}(u^2)_x-u^2$, the formulae \eqref{eq2p18a},\eqref{eq2p18b} have been used for their determination. Explicit expression for first few Adomian polynomials are given by \\
\begin{eqnarray}\label{eq7p6b}
\bar{\cal A}_0(x,t)&=&- u_0^2- u_0 (u_0)_{x},\nonumber \\	
\bar{\cal A}_1(x,t)&=&-u_0 \left\{2 u_1+(u_1)_{x}\right\}-u_1 \left\{u_1+(u_0)_{x}+(u_1)_{x}\right\},\nonumber \\
&\vdots&
\end{eqnarray}
etc. Then using the expressions for leading order solution given in \eqref{eq7p5} and Adomian polynomials  mentioned above in \eqref{eq7p6} we obtain the first few corrections
\begin{eqnarray}\label{{eq7p8}}
 u_{1}(x,t) &=& e^{t}\int_0^t e^{-t^{'}} \bar{\cal A}_{0}(x,t')  \ dt'=0,\nonumber \\
 u_{2}(x,t) &=& e^{t}\int_0^t e^{-t^{'}}\bar{\cal A}_{1}(x,t') \ dt'=0, \nonumber \\
 &\vdots&
\end{eqnarray}
So, the approximate solution of the Eq. \eqref{eq7p1} 
\begin{equation}\label{eq7p10}
u(x,t)= \sum_{k=0}^\infty u_k(x,t)=e^{t-x},
\end{equation}
converges to the exact solution to the problem.

In their studies \citep{nourazar2013new}, Ghoreishi et al. have solved this equation using  FTADM  and obtained the solution in the form
\begin{equation}\label{eq7p11}
  u_{0}(x,t)=e^{-x} ,
\end{equation}
\begin{equation}\label{eq7p12}
  u_{1}(x,t) = t e^{-x}, \ \  u_{2}(x,t) =  \frac{t^2}{2}e^{-x},\cdots.
\end{equation}
Hence the approximate solution found as
\begin{equation}\label{eq7p14}
u(x,t)= \sum_{k=0}^\infty u_k(x,t)=( e^{-x}  + t e^{-x}+\frac{t^2}{2}e^{-x}+ \cdots).
\end{equation}
Comparison of leading order solution and correction terms obtained by these two methods reveals that while the approximate solution obtained by Ghoreishi et al. is an (convergent) infinite series of nonvanishing terms, the present scheme provides the exact solution in the leading term. The correction terms vanish identically. Hence RCAS seems to be rapidly convergent in comparison to the FTADM of Ghoreishi et al.  \citep{nourazar2013new}.
%
\begin{exmp}\label{exap2} 
\end{exmp}
Here we consider a nonlinear equation  \citep{polyanin2012handbook} (page 455)
\begin{equation}\label{ppl1}
 u_{tt}- b\; u = a \; \left(u u_x \right)_x. 
\end{equation}
The exact solution of this equation corresponding to the initial condition
\[ 
 u(x,0)=B_1-\frac{b (c_1+x)^2}{6 a}, \  u_t(x,0)=\sqrt{\frac{2 b}{3}}  B_2
\]
may be found as 
\begin{equation}\label{ppex2exct}
u(x,t)=-\frac{b (c_1+x)^2}{6 a}+B_1 \cosh \left(\sqrt{\frac{2 b}{3}} t\right)+B_2 \sinh \left(\sqrt{\frac{2 b}{3}}  t\right).
\end{equation}
In the scheme of RCAS, the above equation can be put into the form
\begin{equation}\label{pp12}
\hat{\textbf{O}}[u](x,t)=\mathcal{N}[u,u_x,u_{xx}](x,t),
\end{equation}
where the linear operator $\hat{\textbf{O}}[\cdot]  = e^{-\sqrt{b}\;t}\frac{\partial}{\partial t} (e^{2 \sqrt{b}\;t}\frac{\partial}{\partial t}(e^{-\sqrt{b}\;t}[\cdot] ))$ and $\mathcal{N}[u,u_x,u_{xx}](x,t)=a \; \left(u\; u_x \right)_x $ is the collection of nonlinear terms of the equation. So the inverse operator for this equation is given by
\begin{equation}\label{pp13}
\hat{\textbf{O}}^{-1}[\cdot]  = e^{\sqrt{b}\;t}\int_a^t e^{-2 \sqrt{b}\;t^{'}}\int_a^{t'}e^{\sqrt{b}\;t^{''}}[\cdot](X,t'') \ dt'' \ dt'.
\end{equation}
Operating $\hat{\textbf{O}}^{-1}[\cdot]$ mentioned above on both sides of (\ref{pp12}) and we get from (\ref{eq2p15})
\begin{equation}\label{pp14}
u(x,t)= u(x,0) e^{ \sqrt{b}\;t} + \frac{u_t(x,0)- \sqrt{b}\;
 u(x,0)}{2 \; \sqrt{b}}\{e^{ \sqrt{b}\; t}-e^{- \sqrt{b}\; t} \}+\hat{\textbf{O}}^{-1}[\mathcal{N}[u,u_x,u_{xx}]](x,t).
\end{equation}
Use of representation of $\mathcal{N}[u,u_x,u_{xx}](x,t)$ in terms Adomian polynomials $\bar{\cal A}_{n}(x,t)$ defined in \eqref{eq2p18a}, \eqref{eq2p18b} provides the
recursion relation for successive correction terms as
\begin{align}\label{pp15}
  &u_{0}(x,t)= u(x,0) e^{ \sqrt{b} \ t} + \frac{u_t(x,0)- \sqrt{b} \
 u(x,0)}{2 \ \sqrt{b}}\{e^{\sqrt{b} \ t}-e^{- \sqrt{b}\ t} \},\nonumber \\
 & u_{n+1}(x,t) = \hat{\textbf{O}}^{-1}[\bar{\cal A}_{n}](x,t),  \ \ n\geq 0.
\end{align}
The explicit expressions for first few Adomian polynomials $\bar{\cal A}_{n}(x,t)$ are 
\begin{align*}
\bar{\cal A}_0(x,t)=&  a \ \left\{u_0 (u_0)_x \right\}_x , \\
\bar{\cal A}_1(x,t)=& a \left\{(u_1)_{x}^2+2 (u_0)_{x} (u_1)_{x}+u_1
   (u_0)_{xx}+u_0 (u_1)_{xx}+u_1 (u_1)_{xx}\right\},\\
   & \vdots
   \end{align*}
Use of these expressions into the relation \eqref{pp15} gives the first few terms in the series of the approximate solution as
\begin{align*}
  u_{0}(x,t)=&\frac{1}{12 a} \left[e^{-\sqrt{b} t} \left\{6 a B_1 \left(e^{2 \sqrt{b} t}+1\right)+2 \sqrt{6} a B_2 \left(e^{2 \sqrt{b} t}-1\right)-b \left(e^{2 \sqrt{b} t}+1\right) \left(c_1+x\right)^2\right\}\right],\\
   u_{1}(x,t) =&\frac{1}{216 a} \left[e^{\sqrt{b} t} \left\{e^{-3 \sqrt{b} t} \left(-6 a B_1 \left(2 e^{\sqrt{b} t}-6 e^{2 \sqrt{b} t}+e^{4 \sqrt{b} t}+1\right)-2 \sqrt{6} a B_2 \left(2 e^{\sqrt{b} t}+e^{4 \sqrt{b} t}-1\right) \right. \right. \right.\\
  & \left. \left. \left. +3 b \left(2 e^{\sqrt{b} t}-6
   e^{2 \sqrt{b} t}+e^{4 \sqrt{b} t}+1\right) \left(c_1+x\right)^2\right)-12 a B_1+4 \sqrt{6} a B_2+6 b \left(c_1+x\right)^2\right\}\right],
   \end{align*}
  \begin{align*} 
   u_{2}(x,t)  = &\frac{1}{116640 a} \left[e^{-4 \sqrt{b} t} \left\{-3 a B_1 \left(e^{\sqrt{b} t}-1\right) \left(750 e^{4 \sqrt{b} t} \left(2
   \sqrt{b} t-1\right)+73 e^{\sqrt{b} t}+633 e^{2 \sqrt{b} t}-633 e^{5 \sqrt{b} t} \right. \right. \right. \\ 
   & \left. \left. \left. -73 e^{6 \sqrt{b} t}+2
   e^{7 \sqrt{b} t}+750 e^{3 \sqrt{b} t} \left(2 \sqrt{b} t+1\right)-2\right)-2 \sqrt{6} a B_2 \left(3
   e^{5 \sqrt{b} t} \left(340 \sqrt{b} t-183\right)+45 e^{\sqrt{b} t} \right. \right. \right. \\ 
   & \left. \left. \left. +170 e^{2 \sqrt{b} t}-170 e^{6
   \sqrt{b} t}-45 e^{7 \sqrt{b} t}+e^{8 \sqrt{b} t}+3 e^{3 \sqrt{b} t} \left(340 \sqrt{b}
   t+183\right)-1\right)+9 b \left(e^{\sqrt{b} t}-1\right)   \right. \right. \\ 
   & \left. \left. \times \left(15 e^{4 \sqrt{b} t} \left(20 \sqrt{b}
   t-1\right)+14 e^{\sqrt{b} t}+174 e^{2 \sqrt{b} t}-174 e^{5 \sqrt{b} t}-14 e^{6 \sqrt{b} t}+e^{7
   \sqrt{b} t}+15 e^{3 \sqrt{b} t}    \right. \right. \right. \\ 
   & \left. \left. \left. \times \left(20 \sqrt{b} t+1\right)-1\right)
   \left(c_1+x\right)^2\right\}\right],\\
   \vdots
 \end{align*}
Successive iteration can be continued to get the approximate solution with desired order of accuracy. After n-th iteration, the approximate solution of the problem obtained as 
\begin{equation}\label{pp20}
u_n^{ap}(x,t)\equiv S_n(x,t) = \sum_{k=0}^n u_k(x,t)
\end{equation}
with the absolute error 
\begin{equation}\label{pp21}
E_n(x,t)=  \vert u(x,t)-  u_n^{ap}(x,t) \vert.
\end{equation}
\begin{table*}[h]
\begin{center}
\caption{Absolute error $E_5(x,t)$ of Example \ref{exap2} for $c_1 = .4,\ a=.5,\ b= .7,\ B_1= .9,\ B_2=.5$ and different values of $x,t$.}\label{tab1}
\begin{tabular}{|c|c|c|c|} \hline
$x$ & $t=0.1$ & $t=0.5$& $t=1$\\ \hline \hline
$-5$&$ 2.77556 \times 10^{-15}$&$9.13825 \times 10^{-12}$&$3.79589 \times 10^{-8}$\\
$-4$&$ 1.44329 \times 10^{-15}$ &$ 5.43565 \times 10^{-12}$&$2.24037 \times 10^{-8}$\\
$-3$&$ 7.77156 \times 10^{-16}$ &$2.63589 \times 10^{-12}$&$1.06424 \times 10^{-8}$\\
$-2$&$2.22045 \times 10^{-16}$&$7.38964 \times 10^{-13}$&$2.67503 \times 10^{-9}$\\
$-1$&$ 1.11022 \times 10^{-16}$&$2.54463 \times 10^{-13}$&$1.49834 \times 10^{-9}$\\
$0$&$ 0.$&$3.44835 \times 10^{-13}$&$1.87774 \times 10^{-9}$\\
$1$&$ 0$&$4.67848 \times 10^{-13}$&$1.53684 \times 10^{-9}$\\
$2$&$ 4.44089 \times 10^{-16}$& $2.18403 \times 10^{-12}$&$8.74538 \times 10^{-9}$\\
$3$& $1.11022 \times 10^{-15}$&$4.80371 \times 10^{-12}$&$1.97479 \times 10^{-8}$\\
$4$&$2.22045 \times 10^{-15}$&$8.32578 \times 10^{-12}$&$3.45444 \times 10^{-8}$\\
$5$&$ 4.21885 \times 10^{-15}$&$1.27511 \times 10^{-11}$&$5.31348 \times 10^{-8}$ \\ \hline
\end{tabular}
\end{center}
\end{table*}
\begin{figure}[h]
\begin{center}
\includegraphics[scale=1.3]{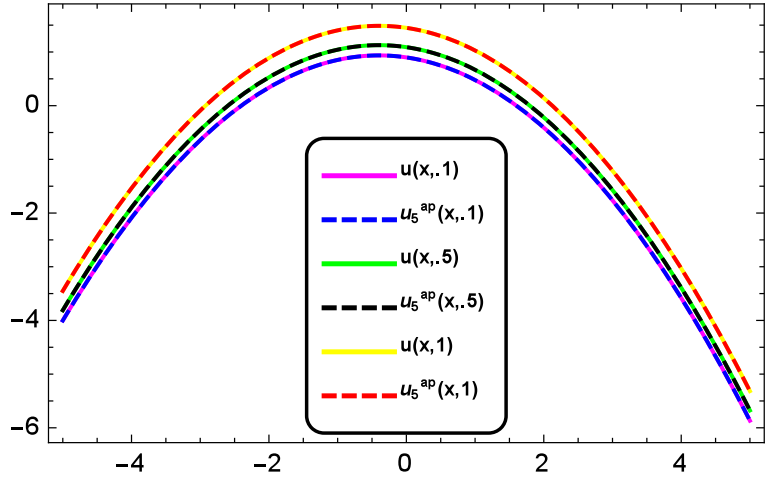}
\caption{Comparing the curves of exact ($u(x,t)$) and approximate solutions ($u_5^{ap}(x,t)$)  for $c_1 = .4,\ a=.5,\ b= .7,\ B_1= .9,\ B_2=.5$ and   different values of $t=.1,.5,1$  corresponding to Example  \ref{exap2}.}\label{fig1}
\end{center}
\end{figure}
\begin{figure*}[h]
\centering
\includegraphics[width=0.3\textwidth]{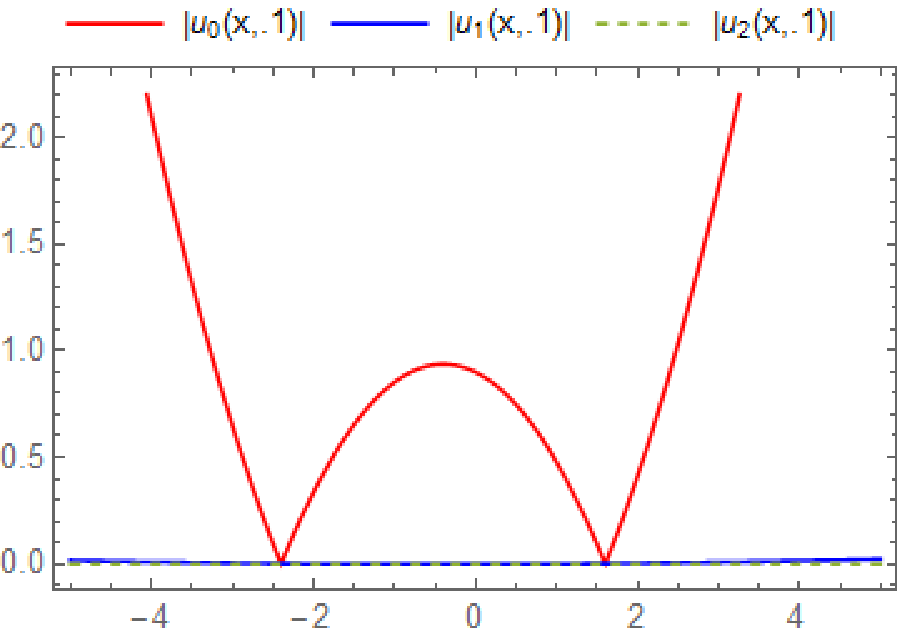}
\includegraphics[width=0.3\textwidth]{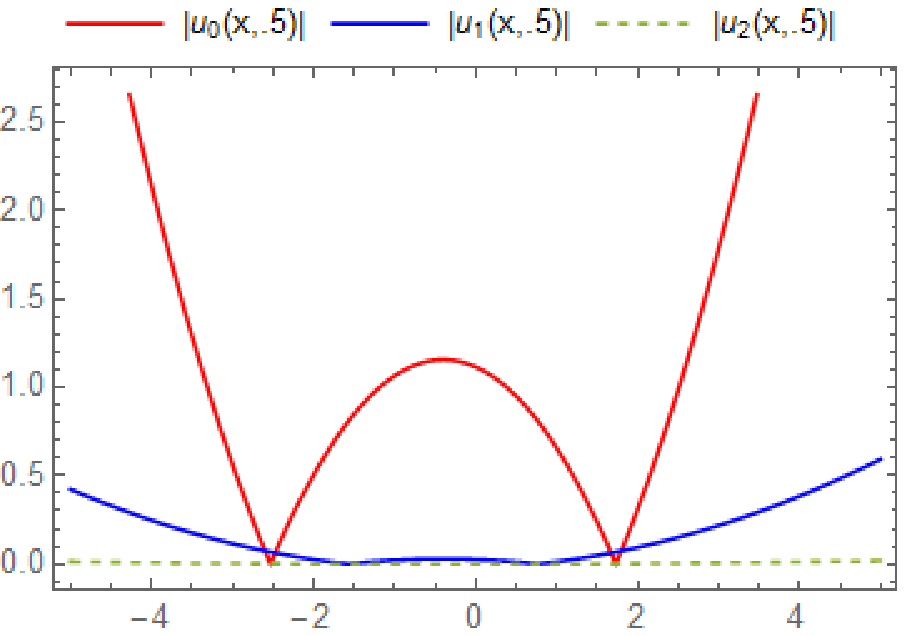}
\includegraphics[width=0.3\textwidth]{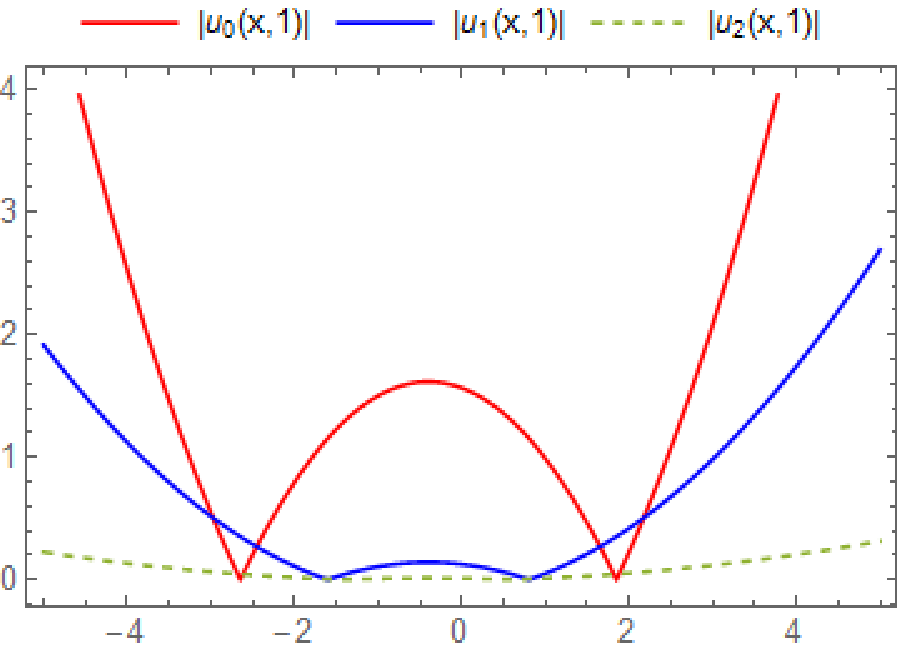}
\vskip 0.05in 
\caption{Comparison of  the correction terms $|u_0(x,t)|, \ |u_1(x,t)|,\ |u_2(x,t)|$ for $t=.1,\ .5,\  1$ corresponding to Example  \ref{exap2}.}\label{fig2}       
\end{figure*}
For illustration, the absolute errors of the approximate solution $u_5^{ap}(x,t)$ for the parameters $c_1 = .4,\ a=.5,\ b= .7,\ B_1= .9,\ B_2=.5$ for several choices of independent variables $x$ and $t$  have been presented in Table \ref{tab1}. It is found that the absolute errors are gradually increasing with $t$ but less than $O(10^{-8})$ at  $t=1$. The exact solution $u(x,t)$ given in \eqref{ppex2exct} and the approximate solution $u_5^{ap}(x,t)$ in \eqref{pp20} for $x \in (-5, 5)$ and $t = 0.1, 0.5, 1$ have been depicted in Figure \ref{fig1}. Reasonably, the figures of the exact solutions and approximate solutions appears to be indistinguishable in the region $x \in (-5, 5)$ for different values of $t$. To exhibit the rate of convergence of the approximate solution, the magnitude of leading and first two correction terms have been presented in Figure \ref{fig2} for $t = 0.1, 0.5, 1$ and found the terms are  rapidly convergent(decaying).
 \begin{exmp}
 \end{exmp}\label{exap3}
  Here we consider a generic equation with general power law nonlinearity \citep{polyanin2012handbook} (page 459)
\begin{equation}\label{pl1}
 u_{tt}- \lambda^2 u = a \ \left(u^n u_x \right)_x +b\ u^{n+1}
\end{equation}
with the initial condition 
 $$u(x,0)=\left(c_0+c_1\right) \left[B_1 \cos \left( \sqrt{\frac{b (n+1)}{a}}x\right)+B_2 \sin \left( \sqrt{\frac{b (n+1)}{a}}x\right)\right]^{\frac{1}{n+1}}$$ and $$u_t(x,0)=\left(c_0 \lambda -c_1 \lambda \right) \left[B_1 \cos \left( \sqrt{\frac{b (n+1)}{a}}x\right)+B_2 \sin \left( \sqrt{\frac{b (n+1)}{a}}x\right)\right]^{\frac{1}{n+1}}.$$
The exact solution to this initial value problem may be found as
\begin{eqnarray}\label{p11exct}
u(x,t)=\left(c_0\ e^{\lambda\ t}+c_1\  e^{- \lambda\ t}\right)\left[ B_1 \cos \left( \sqrt{\frac{b (n+1)}{a}}x\right)+B_2 \sin \left( \sqrt{\frac{b (n+1)}{a}}x\right)\right]^{\frac{1}{n+1}}.
\end{eqnarray}
To obtain an approximate solution to this equation by using the approximation scheme proposed here we write this equation as
\begin{equation}\label{p12}
\hat{\textbf{O}}[u](x,t)=\mathcal{N}[u,u_x,u_{xx}](x,t),
\end{equation}
where $\hat{\textbf{O}}[\cdot]  = e^{-\lambda\ t}\frac{\partial}{\partial t} (e^{2 \lambda\ t}\frac{\partial}{\partial t}(e^{-\lambda\ t}[\cdot] ))$ and $\mathcal{N}[u,u_x,u_{xx}](x,t)=a \ \left(u^n u_x \right)_x +b\ u^{n+1}$ is the collection of  nonlinear term of the equation. The inverse operator for this case is given by
\begin{equation}\label{p13}
\hat{\textbf{O}}^{-1}[\cdot]  = e^{\lambda\ t}\int_a^t e^{-2 \lambda\ t^{'}}\int_a^{t'}e^{\lambda\ t^{''}}[\cdot](X,t'') \ dt'' \ dt'.
\end{equation}
Application of this inverse operator on both sides of (\ref{p12}) gives
\begin{equation}\label{p14}
u(x,t)= u(x,0) e^{ \lambda\ t} + \frac{u_t(x,0)- \lambda
 u(x,0)}{2  \lambda}\{e^{ \lambda\ t}-e^{- \lambda\ t} \}+\hat{\textbf{O}}^{-1}[\mathcal{N}[u,u_x,u_{xx}]](x,t).
\end{equation}
Then following successive steps of RCAS corrections over the leading order approximation can be found as
\begin{align}\label{p15}
  &u_{0}(x,t)= u(x,0) e^{ \lambda \ t} + \frac{u_t(x,0)- \lambda \
 u(x,0)}{2 \ \lambda}\{e^{ \lambda \ t}-e^{- \lambda \ t} \},\nonumber \\
 & u_{n+1}(x,t) = \hat{\textbf{O}}^{-1}[\bar{\cal A}_{n}](x,t),  \ \ n\geq 0.
\end{align}
Here, $\bar{\cal A}_{n}(x,t)$ are Adomian polynomial for the nonlinear term $a \ \left(u^n u_x \right)_x +b\ u^{n+1}$. Using formulae \eqref{eq2p18a}, \eqref{eq2p18b} first few Adomian polynomials are calculated as  \\
\begin{align*}
\bar{\cal A}_0(x,t)=& a \ \left((u_0)^n (u_0)_x \right)_x +b\ (u_0)^{n+1}, \\
\bar{\cal A}_1(x,t)=& -(u_0)^{n-1} \left\{a n (u_0)_{x}^2+a (u_0)_{xx}
   u_0+b (u_0)^2\right\}+a n (u_0)_{x}^2
   \left(u_0+u_1\right)^{n-1}+a n (u_1)_{x}^2
   \left(u_0+u_1\right)^{n-1}\\
  & +2 a n (u_0)_{x} (u_1)_{x}
   \left(u_0+u_1\right)^{n-1}+a (u_0)_{xx}
   \left(u_0+u_1\right)^n+a (u_1)_{xx}
   \left(u_0+u_1\right)^n+b \left(u_0+u_1\right)^{n+1},\\
   & \\
   & \vdots
   \end{align*}
Then first few terms of the approximate solution may be found as
\begin{eqnarray}
  u_{0}(x,t)&=&\left(c_0\ e^{\lambda\ t}+c_1\  e^{- \lambda\ t}\right)\left[ B_1 \cos \left( \sqrt{\frac{b (n+1)}{a}}x\right)+B_2 \sin \left( \sqrt{\frac{b (n+1)}{a}}x\right)\right]^{\frac{1}{n+1}},\nonumber \\
   u_{1}(x,t) &=& 0,\nonumber \\
   u_{2}(x,t)&=& 0,\nonumber \\
  &\vdots&
 \end{eqnarray}
So, the approximate solution of the Eq. \eqref{pl1} obtained by using RCAS is
\begin{equation}\label{p20}
u(x,t)= \sum_{k=0}^\infty u_k(x,t)=\left(c_0\ e^{\lambda\ t}+c_1\  e^{- \lambda\ t}\right)\left[ B_1 \cos \left( \sqrt{\frac{b (n+1)}{a}}x\right)+B_2 \sin \left( \sqrt{\frac{b (n+1)}{a}}x\right)\right]^{\frac{1}{n+1}},
\end{equation}
the exact solution \eqref{p11exct}. It is important to note that the proposed approximation provides the exact solution in the first iteration since all the corrections $u_n(x,t)$ beginning from $n \geq 1$ vanish identically.
\begin{exmp}
\end{exmp}\label{exap4}
 Here we have considered an autonomous nonlinear partial differential equation  \citep{ghoreishi2010adomian} in (1+1) dimensions
\begin{equation}\label{equ3p1}
 u_{tt}-u = u^2\frac{\partial^2}{\partial x^2}(u_{x}u_{xx}u_{xxx})+u_x^2\frac{\partial^2}{\partial x^2}(u_{xx}^3)-18 u^5,  \  0< x < 1, \ t> 0.
\end{equation}
The initial conditions are $u(x,0)=e^{x}$ and $u_t(x,0)=e^{x}.$
It can be verified that the exact solution is $u(x,t)=e^{x+t}.$

Using the notations of RCAS, above equation an be put into the form
\begin{equation}\label{equ3p2}
\hat{\cal{O}}[u](x,t)={\cal N} [u,\cdots,u_{xxxxx}](x,t),
\end{equation}
with the linear operator
\[
 \hat{{\cal O}}[\cdot]  = e^{- t}\frac{\partial}{\partial t} (e^{2  t}\frac{\partial}{\partial t}(e^{- t}[\cdot] ))
 \]
and collection of nonlinear terms
\[
{\cal N} [u,\cdots,u_{xxxxx}](x,t)=u^2\frac{\partial^2}{\partial x^2}(u_{x}u_{xx}u_{xxx})+u_x^2\frac{\partial^2}{\partial x^2}(u_{xx}^3)-18 u^5.
\]
So, the inverse operator for this example becomes
\begin{equation}\label{equ3p3}
\hat{{\cal O}}^{-1}[\cdot]  = e^{ t}\int_a^t e^{-2  t^{'}}\int_0^{t'}e^{ t^{''}}[\cdot](x,t'') \ dt'' \ dt'.
\end{equation}
Application of this operator on both sides of (\ref{equ3p2}) in combination with  initial conditions gives exact solution in the form
\begin{equation}\label{equ3p4}
u(x,t)= u(x,0) e^{t} + \frac{u_t(x,0)-
 u(x,0)}{2 }\left(e^{ t}-e^{- t} \right)+\hat{{\cal O}}^{-1}[{\cal N}[u,\cdots,u_{xxxxx}]](x,t).
\end{equation}
We use formulae (\ref{eq2p19}) and (\ref{eq2p20}) to obtain explicit $x$ and $t$ dependence of the leading term and the formula for corrections as
\begin{equation}\label{exu2p5}
  u_{0}(x,t)=u(x,0) e^{t} + \frac{u_t(x,0)-
 u(x,0)}{2 }\{e^{ t}-e^{- t} \},
\end{equation}
\begin{equation}\label{exu2p6}
  u_{n+1}(x,t) = \hat{\cal O}^{-1}[\bar{\cal A}_{n}](x,t),  \ \ n\geq 0.
\end{equation}
Formulae \eqref{eq2p18a}, \eqref{eq2p18b} have been used for the determination of Adomian polynomials $\bar{\cal A}_{n}(x,t)$ for the nonlinear term ${\cal N} [u,\cdots,u_{xxxxx}](x,t)$. We present below explicit form for first few elements for their use in the calculation of first two correction terms as 
\begin{eqnarray}\label{equ3p6a}
\bar{\cal A}_0 (x,t)= (u_0)^2\left\{(u_0)_{x}(u_0)_{xx}(u_0)_{xxx}\right\}_{xx}+(u_0)_x^2\left\{(u_0)_{xx}^3\right\}_{xx}-18 u_0^5, 
\end{eqnarray}
\begin{align*}\label{equ3p6b}
\bar{\cal A}_1 (x,t)=&-18 u_1^5-180 u_0^3 u_1^2+\left[2 \left\{ (u_0)_{xxxx}+(u_1)_{xxxx}\right\} (u_0)_{xx}^2+\left\{3 \left\{(u_0)_{xxx}+(u_1)_{xxx}\right\}^2 \right. \right. \\
& \left. \left. +4 (u_1)_{xx}
   \left\{(u_0)_{xxxx}+(u_1)_{xxxx}\right\}+\left\{(u_0)_{x}+(u_1)_{x}\right\} \left\{(u_0)_{xxxxx}+(u_1)_{xxxxx}\right\}\right\} (u_0)_{xx}  \right.\\
   &  \left. +2 (u_1)_{xx}^2
   \left\{(u_0)_{xxxx}+(u_1)_{xxxx}\right\}+3 \left\{ (u_0)_{x}+(u_1)_{x}\right\} \left\{(u_0)_{xxx}+(u_1)_{xxx}\right\}
   \left\{(u_0)_{xxxx}+(u_1)_{xxxx}\right\} \right. \\
   & \left. +(u_1)_{xx} \left\{3 \left\{(u_0)_{xxx}+(u_1)_{xxx}\right\}^2+\left\{(u_0)_{x}+(u_1)_{x}\right\}
   \left\{(u_0)_{xxxxx}+(u_1)_{xxxxx}\right\}\right\}\right] u_1^2-90 u_0^4 u_1\\
   &+2 u_0 \left[-45 u_1^3+2 (u_0)_{xx}^2 \left\{(u_0)_{xxxx}+(u_1)_{xxxx}\right\}+2
   (u_1)_{xx}^2 \left\{(u_0)_{xxxx}+(u_1)_{xxxx}\right\} \right. \\
   & \left. +3 \left\{(u_0)_{x}+(u_1)_{x}\right\} \left\{(u_0)_{xxx}+(u_1)_{xxx}\right\}
   \left\{(u_0)_{xxxx}+(u_1)_{xxxx}\right\}+(u_1)_{xx} \left\{3 \left\{(u_0)_{xxx}+(u_1)_{xxx}\right\}^2 \right. \right.\\
   & \left. \left. +\left\{(u_0)_{x}+(u_1)_{x}\right\}
   \left\{(u_0)_{xxxxx}+(u_1)_{xxxxx}\right\}\right\}+(u_0)_{xx} \left\{3 \left\{(u_0)_{xxx}+(u_1)_{xxx}\right\}^2+4 (u_1)_{xx} \right. \right. \\
   & \left. \left.  \times
   \left\{(u_0)_{xxxx}+(u_1)_{xxxx}\right\}+\left\{(u_0)_{x}+(u_1)_{x}\right\} \left\{(u_0)_{xxxxx}+(u_1)_{xxxxx}\right\}\right\}\right] u_1+3
   \left[\left\{\left\{(u_0)_{xxxx}   \right. \right. \right.  \\
   &  \left.  \left. \left. +(u_1)_{xxxx}\right\} (u_1)_{xx}^2+2 \left\{ \left\{(u_0)_{xxx}+(u_1)_{xxx}\right\}^2+(u_0)_{xx}
   \left\{(u_0)_{xxxx}+(u_1)_{xxxx}\right\}\right\} (u_1)_{xx} \right. \right. \\
   & \left. \left. +(u_0)_{xx} \left\{2 (u_1)_{xxx}\left\{2 (u_0)_{xxx}+(u_1)_{xxx}\right\}+(u_0)_{xx}
   (u_1)_{xxxx}\right\}\right\} (u_0)_{x}^2+2 (u_1)_{x} \left\{(u_0)_{xx}+(u_1)_{xx}\right\} \right. \\
   & \left. \left\{2
   \left\{(u_0)_{xxx}+(u_1)_{xxx}\right\}^2+\left\{(u_0)_{xx}+(u_1)_{xx}\right\} \left\{(u_0)_{xxxx}+(u_1)_{xxxx}\right\}\right\} (u_0)_{x}+(u_1)_{x}^2
   \left\{(u_0)_{xx}+(u_1)_{xx}\right\} \right. \\
   & \left. \left\{2 \left\{(u_0)_{xxx}+(u_1)_{xxx}\right\}^2+\left\{(u_0)_{xx}+(u_1)_{xx}\right\}
   \left\{(u_0)_{xxxx}+(u_1)_{xxxx}\right\}\right\}\right]+u_0^2 \left[-180 u_1^3 \right. \\
   &\left.  +3 \left\{(u_0)_{x} (u_1)_{xxx}+(u_1)_{x}
   \left\{(u_0)_{xxx}+(u_1)_{xxx}\right\}\right\} (u_0)_{xxxx}+2 (u_0)_{xx}^2 (u_1)_{xxxx}+3 \left\{(u_0)_{x} \right. \right. \\
   & \left. \left.  +(u_1)_{x}\right\}
   \left\{(u_0)_{xxx}+(u_1)_{xxx}\right\} (u_1)_{xxxx}+2 (u_1)_{xx}^2 \left\{(u_0)_{xxxx}+(u_1)_{xxxx}\right\}+(u_0)_{xx} \left\{3 (u_1)_{xxx} \right. \right. \\
   & \left. \left.  \left\{2
   (u_0)_{xxx}+(u_1)_{xxx}\right\}+(u_1)_{x} (u_0)_{xxxxx}+\left\{(u_0)_{x}+(u_1)_{x}\right\} (u_1)_{xxxxx}\right\}+(u_1)_{xx} \left\{3
   \left\{(u_0)_{xxx} \right. \right. \right. \\
   & \left. \left. \left.  +(u_1)_{xxx}\right\}^2+4 (u_0)_{xx} \left\{(u_0)_{xxxx}+(u_1)_{xxxx}\right\}+\left\{(u_0)_{x}+(u_1)_{x}\right\}
   \left\{(u_0)_{xxxxx}+(u_1)_{xxxxx}\right\}\right\}\right],\\
  & \vdots
\end{align*}
 Then the use of these expressions in the formula \eqref{exu2p6} provide explicit $x,t$ dependence of the leading term and first two corrections as follows
\begin{equation}\label{equ3p7}
  u_{0}(x,t)= u(x,0) e^{t} + \frac{u_t(x,0)-
 u(x,0)}{2 }\{e^{ t}-e^{- t} \}=e^{x+t},
\end{equation}
\begin{equation}\label{equ3p8}
  u_{1}(x,t) = e^{ t}\int_0^t e^{-2  t^{'}}\int_0^{t'}e^{ t^{''}}\bar{\cal A}_{0}(x,t'') \ dt'' \ dt'=0,
\end{equation}
\begin{align}\label{equ3p9}
  u_{2}(x,t) =& e^{ t}\int_0^t e^{-2  t^{'}}\int_0^{t'}e^{ t^{''}}\bar{\cal A}_{1}(x,t'') \ dt'' \ dt'=0,\\
& \vdots \nonumber
\end{align}
It may be verified after a lengthy but straightforward calculation that higher order corrections vanish identically. So the approximate solution of the equation considered in this  example obtained by using RCAS can found as
\begin{equation}\label{equ3p10}
u(x,t)= \sum_{k=0}^\infty u_k(x,t)=e^{x+t}
\end{equation}
which seems to be the exact solution to the problem.

In their studies \citep{ghoreishi2010adomian}, M. Ghoreishi et al. solved this equation and got the leading order and first few corrections of the approximate solution in the form
\begin{equation}\label{equ3p11}
  u_{0}(x,t)= e^{x}(1+t) ,
\end{equation}
\begin{equation}\label{equ3p12}
  u_{1}(x,t) = e^{x}(\frac{t^2}{2}+\frac{t^3}{6}),
\end{equation}
\begin{align}\label{equ3p13}
  u_{2}(x,t) =& e^{x}(\frac{t^4}{24}+\frac{t^5}{120}),\\
  &\vdots \nonumber
\end{align}
Assuming that next order corrections follow the sequence of first two obtained in \citep{ghoreishi2010adomian}, the approximate solution can be put in the form
\begin{equation}\label{equ3p10}
u(x,t)= \sum_{k=0}^\infty u_k(x,t)= e^{x}(1+t+\frac{t^2}{2}+\frac{t^3}{6}+\frac{t^4}{24}+\frac{t^5}{120}- \cdots).
\end{equation}
of an infinite series seems to converge to the exact solution $ e^{x+t}$. So, for this equation RCAS proposed here seems to be efficient than the ADM exercised in \citep{ghoreishi2010adomian}.
\begin{exmp}
\end{exmp}\label{exap5}
 We have considered here the nonlinear wave-like equation \citep{ghoreishi2010adomian} with variable
coefficients in (1+1)-dimensions
\begin{equation}\label{eeq3p1}
 u_{tt}+u = x^2\left\{\frac{\partial}{\partial x }(u_{x} u_{xx})-u_{xx}^2\right\}, \ 0\leq x \leq1,\ 0\leq t
\end{equation}
with the initial condition $u(x,0)=0$ and $u_t(x,0)=x^2.$
It can be verified that the exact solution is $u(x,t)=x^2 \textrm{sin}t.$
To obtain the approximate solution of Eq. (\ref{eeq3p1}) by using RCAS proposed here, we write this equation as
\begin{equation}\label{eeq3p2}
\hat{\cal{O}}[u](x,t)={\cal N}[u,\cdots,u_{xxx}](x,t)
\end{equation}
where the linear operator $\hat{{\cal O}}[\cdot]  = e^{-i t}\frac{\partial}{\partial t} (e^{2 i t}\frac{\partial}{\partial t}(e^{-i t}[\cdot] ))$ and the collection of nonlinear term ${\cal N} [u,\cdots,u_{xxx}](x,t)=x^2\frac{\partial}{\partial x }(u_{x} u_{xx})-x^2( u_{xx})^2$. So, the inverse operator is given by
\begin{equation}\label{eeq3p3}
\hat{{\cal O}}^{-1}[\cdot]  = e^{i t}\int_a^t e^{-2 i t^{'}}\int_a^{t'}e^{i t^{''}}[\cdot](x,t'') \ dt'' \ dt'.
\end{equation}
Applying $\hat{{\cal O}}^{-1}$ on both sides of (\ref{eeq3p2}) and the initial conditions, the successive terms of approximate solution can be written formally as
\begin{equation}\label{eeq3p51}
  u_{0}(x,t)=u(x,0) e^{it} + \frac{u_t(x,0)-i
 u(x,0)}{2 i}\{e^{i t}-e^{-i t} \} = x^2 \textrm{sin}t,
\end{equation}
\begin{equation}\label{eeq3p61}
  u_{n+1}(x,t) = \hat{\cal O}^{-1}[\bar{\cal A}_{n}](x,t),  \ \ n\geq 0.
\end{equation}
The explicit expression for first few Adomian polynomials $\bar{\cal A}_{n}(x,t)$  for the nonlinear term  $ x^2\left\{\frac{\partial}{\partial x }(u_{x} u_{xx})-u_{xx}^2\right\}$ are
\begin{eqnarray}
\bar{\cal A}_0(x,t)&=&x^2 \left(u_0\right)_{x} \left(u_0\right)_{xxx}, \label{eex3p6a}\\
\nonumber \\
\bar{\cal A}_1(x,t)&=&x^2 \left(u_1\right)_{x} \left(u_0\right)_{xxx}+x^2 \left(u_0\right)_{x} \left(u_1\right)_{xxx}+x^2 \left(u_1\right)_{x} \left(u_1\right)_{xxx},\label{eex3p6b} \\
\vdots\nonumber
\end{eqnarray}
Then use of the leading term in \eqref{eeq3p51} and Adomian polynomials in \eqref{eex3p6a}, \eqref{eex3p6b} into \eqref{eeq3p61} give first two corrections of the approximation
\begin{equation}\label{eeq3p8}
  u_{1}(x,t) = e^{i t}\int_0^t e^{-2 i t^{'}}\int_0^{t'}e^{i t^{''}}\bar{\cal A}_{0}(x,t'') \ dt'' \ dt'=0,
\end{equation}
\begin{equation}\label{eeq3p9}
  u_{2}(x,t) = e^{i t}\int_0^t e^{-2 i t^{'}}\int_0^{t'}e^{i t^{''}}\bar{\cal A}_{1}(x,t'') \ dt'' \ dt'=0.
\end{equation}
It may be verified that the remaining correction terms are zero identically. So, the approximate solution of the equation obtained by the present method becomes
\begin{equation}\label{eeq3p10}
u(x,t)= \sum_{k=0}^\infty u_k(x,t)=x^2 \textrm{sin} t.
\end{equation}
Ghoreishi et al. used ADM in Ref.\citep{ghoreishi2010adomian} for the determination of solution of the same equation and obtain
\begin{equation}\label{eeq3p11}
  u_{0}(x,t)= t x^2, \ \   u_{1}(x,t) = - \frac{t^3}{3!}x^2, \ \  u_{2}(x,t) = - \frac{t^5}{5!}x^2 \cdots
\end{equation}
so that the approximate solution is given in the form
\begin{equation}\label{eeq3p10}
u(x,t)= \sum_{k=0}^\infty u_k(x,t)=(  t x^2  - \frac{t^3}{3!}x^2- \frac{t^5}{5!}x^2- \cdots).
\end{equation}
It is clear that the series of infinite terms converge to the exact solution. This observation establishes further the efficiency of the proposed RCAS for obtaining an accurate approximate solution of nonlinear autonomous as well as  non-autonomous PDEs.
\begin{exmp}
\end{exmp}\label{exap6}
 We consider here the  nonlinear wave-like equation in (2+1) dimensions with variable  \citep{ghoreishi2010adomian}
coefficients
\begin{equation}\label{exx3p1}
 u_{tt}+u =\frac{\partial^2}{\partial x \partial y}(u_{xx}\;u_{yy})-\frac{\partial^2}{\partial x \partial y}(x\; y\; u_{x}\;u_{y})
\end{equation}
The initial conditions are $u(x,y,0)=e^{x\ y}$ and $u_t(x,y,0)=e^{x\ y}.$
It can be verified that the exact solution is $u(x,y,t)=e^{x\ y}(\textrm{sin} t+\textrm{cos} t).$

Following the symbols used in RCAS, we write the given in the form
\begin{equation}\label{exx3p2}
\hat{\cal{O}}[u](x,y,t)={\cal N} [u,u_x,\cdots,u_{yyyy}](x,y,t),
\end{equation}
with $\hat{{\cal O}}[\cdot]  = e^{-i\ t}\frac{\partial}{\partial t} (e^{2\ i\ t}\frac{\partial}{\partial t}(e^{-i\ t}[\cdot] ))$ and ${\cal N} [u,u_x,\cdots,u_{yyyy}](x,y,t)=\frac{\partial^2}{\partial x \partial y}(u_{xx}u_{yy})-\frac{\partial^2}{\partial x \partial y}(x y u_{x}u_{y})$. The inverse operator in this case is given by
\begin{equation}\label{exx3p3}
\hat{{\cal O}}^{-1}[\cdot]  = e^{i\ t}\int_0^t e^{-2\ i\ t^{'}}\int_0^{t'}e^{i\ t^{''}}[\cdot](X,t'') \ dt'' \ dt'.
\end{equation}
Application of this operator on both side of (\ref{exx3p2}) and using initial conditions one gets (using (\ref{eq2p15}))
\begin{equation}\label{eqq3p4}
u(x,y,t)= u(x,y,0) e^{i\ t} + \frac{u_t(x,y,0)-i\ 
 u(x,y,0)}{2\ i}\{e^{i\ t}-e^{-i\ t} \}+\hat{{\cal O}}^{-1}[{\cal N}[u,u_x,\cdots,u_{yyyy}]](x,y,t).
\end{equation}
Then use of the formulae (\ref{eq2p19}) and (\ref{eq2p20}) the higher order corrections
\begin{equation}\label{eqq3p5}
 u_{n+1}(x,y,t) = \hat{\cal O}^{-1}[\bar{\cal A}_{n}](x,y,t),  \ \ n\geq 0
\end{equation}
with
\begin{equation}\label{eqq3p6}
u_{0}(x,y,t)=u(x,y,0) e^{i\ t} + \frac{u_t(x,y,0)-i\ 
 u(x,y,0)}{2\ i}\{e^{i\ t}-e^{-i\ t} \},
\end{equation}
where $\bar{\cal A}_{n}(x,y,t), n\geq 0$ are Adomian polynomials for the nonlinear term $\frac{\partial^2}{\partial x \partial y}(u_{xx}u_{yy})-\frac{\partial^2}{\partial x \partial y}(x y u_{x}u_{y})$ may be obtained by using formula (\ref{eq2p7}). First few Adomian polynomials  for example are 
\begin{eqnarray}
\bar{\cal A}_0(x,y,t)=\left\{(u_0)_{xx}(u_0)_{yy} \right\}_{xy}-\left\{x y (u_0)_{x}(u_0)_{y} \right\}_{xy}, \nonumber
\end{eqnarray}
    \begin{align*}
\bar{\cal A}_1(x,y,t)=&-x y (u_1)_{x,y}^2-y (u_0)_{y} (u_1)_{x,y}-x (u_0)_{x} (u_1)_{x,y}-x (u_1)_{x} (u_1)_{x,y}-2 x y (u_0)_{x,y}
   (u_1)_{x,y} \\
   & -(u_0)_{y} (u_1)_{x}-y (u_0)_{yy} (u_1)_{x}-x (u_1)_{x} (u_0)_{x,y}-x y (u_1)_{x} (u_0)_{xyy}-x y
   (u_0)_{x} (u_1)_{xyy} \\
   &-x y (u_1)_{x} (u_1)_{xyy}+(u_1)_{xyyy} (u_0)_{xx}-x (u_0)_{y} (u_1)_{xx}-x y (u_0)_{yy}
   (u_1)_{yy} +(u_0)_{xyyy} (u_1)_{xx} \\
   &+(u_1)_{xyyy} (u_1)_{xx}+(u_1)_{xyy} (u_0)_{xxy}-x y (u_0)_{y}
   (u_1)_{xxy}+(u_0)_{xyy} (u_1)_{xxy}+(u_1)_{xyy} (u_1)_{xxy} \\
   &-(u_1)_{y} \left[ (u_0)_{x}+(u_1)_{x}+x
   \left\{ (u_0)_{xx}+(u_1)_{xx}\right\}+y \left\{(u_0)_{xy}+(u_1)_{xy}+x \left\{(u_0)_{xxy}+(u_1)_{xxy}\right\}\right\} \right] \\
   &+(u_1)_{yyy}
   (u_0)_{xxx}+\left\{(u_0)_{yyy}+(u_1)_{yyy}\right\} (u_1)_{xxx}-(u_1)_{yy} \left[y \left\{(u_0)_{x}+(u_1)_{x}+x
   \left((u_0)_{xx}+(u_1)_{xx}\right)\right\} \right. \\
   & \left. -(u_0)_{xxxy}-(u_1)_{xxxy}\right]+(u_0)_{yy} (u_1)_{xxxy}\\
   \vdots
     \end{align*}
Use of last three expressions into the formulae for higher order corrections give 
\begin{equation}\label{eqq3p7}
  u_{0}(x,y,t)= u(x,y,0) e^{i\ t} + \frac{u_t(x,y,0)-i\
 u(x,y,0)}{2\ i}\{e^{i\ t}-e^{-i\ t} \}=e^{x\ y}(\textrm{sin}t+\textrm{cos}t) ,
\end{equation}
\begin{equation}\label{eqq3p8}
  u_{1}(x,y,t) = e^{i\ t}\int_0^t e^{-2\ i\ t^{'}}\int_0^{t'}e^{i\ t^{''}}\bar{\cal A}_{0}(x,y,t'') \ dt'' \ dt'=0,
\end{equation}
\begin{align}\label{eqq3p9}
  u_{2}(x,y,t) &= e^{i\ t}\int_0^t e^{-2\ i\ t^{'}}\int_0^{t'}e^{i\ t^{''}}\bar{\cal A}_{1}(x,y,t'') \ dt'' \ dt'=0,\\
 &  \vdots \nonumber
\end{align}
So, the approximate solution obtained by using RCAS becomes
\begin{equation}\label{eqq3p10}
u(x,y,t)= \sum_{k=0}^\infty u_k(x,y,t)=e^{x\ y}(\textrm{sin}t+\textrm{cos}t)
\end{equation}
seems to be the exact solution to the problem.
M. Ghoreishi et al., solve this problem in \citep{ghoreishi2010adomian} and obtain the approximate solution in the form
\begin{equation}\label{eqq3p11}
  u_{0}(x,y,t)= e^{x\ y}(1+t) ,
\end{equation}
\begin{equation}\label{eqq3p12}
  u_{1}(x,y,t) = e^{x\ y}(-\frac{t^2}{2}-\frac{t^3}{6}),
\end{equation}
\begin{align}\label{eqq3p13}
  u_{2}(x,y,t) &= e^{x\ y}(\frac{t^4}{24}+\frac{t^5}{120})\\
  & \vdots \nonumber
\end{align}
Assuming the successive corrections follow the sequence with first two corrections mentioned above, their approximate solution may be put in the form
\begin{equation}\label{eqq3p10}
u(x,y,t)= \sum_{k=0}^\infty u_k(x,y,t)= e^{x\ y}(1+t-\frac{t^2}{2}-\frac{t^3}{6}+\frac{t^4}{24}+\frac{t^5}{120}- \cdots ).
\end{equation}
It is clear that the series converges to the exact solution. It is worthy to mention here that the approximate solution obtained by using RCAS  provides the exact expression  in the leading order while the same in ADM appears as a power series in the time variable $t$ of the exact solution. Hence RCAS seems to be more efficient than the ADM as given in \citep{ghoreishi2010adomian} for this example.

\section{Conclusion}\label{sec6}
This work deals with the development of an easy to exercise approximation scheme for getting rapidly convergent approximation solution of initial value problems in many dimension within a bounded or unbounded domain. The underlying idea of the method proposed here is that the entire linear part involved in the equation has been taken in operator form, resulting in rapidly convergent corrections over the leading term in the approximate solution to the problem. The convergence of the sum of the successive correction terms has been established and an estimate of the error in the approximation has also been presented. From our study, it appears that:
\begin{itemize}
\item[i)] When the linear part of the given problem contains the highest order derivative term only (in the absence of first derivative and derivative-free term in the linear part in this study), successive steps and results from the present method are same as the corresponding results  of ADM. Thus, present scheme may be regarded as the generalization of the ADM. 
\item[ii)] In other cases  ($ \lambda \ \neq \ 0$), results obtained by using the present method are seem to be rapidly convergent and  more accurate in comparison to other approximation methods. Even the approximate solution provides the exact one in some cases (resulting from the higher order corrections are zero identically). 
\item[iii)]Not only initial value problems, RCAS is also able to provide more accurate results in case of boundary value problems too. In case of BVP in an unbounded domain, general term in the series of approximation can be speculated from the first few terms of the approximation so that the exact solution localized within a finite region can be recovered in a straightforward way.
\end{itemize}
These observations suggest that extension of the method developed here for getting an accurate approximate solution to other families of partial differential equations involving factorisable linear part with variable coefficients and to BVPs involving  system of  nonlinear PDEs seems to be worthy. Works in this direction are in progress, will be reported elsewhere.
\section*{Acknowledgements}
This work is supported in part by UGC assisted SAP(DRS Phase-III) program grant No. F.510/4/DRS/2009(SAP-I)  through Department of Mathematics, Visva-Bharati,  Santiniketan-731 235, W.B., India.
\section*{References}
\bibliographystyle{elsarticle-num}
\bibliography{wavelike}
\end{document}